\theoremstyle{definition}
\theoremstyle{remark}
\numberwithin{equation}{section}
\numberwithin{equation}{section} 
\numberwithin{figure}{section} 
\theoremstyle{plain}
\newtheorem{thm}{Theorem}[section]
  \theoremstyle{plain}
  \newtheorem{prop}[thm]{Proposition}
  \theoremstyle{plain}
  \newtheorem{lem}[thm]{Lemma}
  \theoremstyle{plain}
  \newtheorem{cor}[thm]{Corollary}
\numberwithin{equation}{section}
\numberwithin{thm}{section}
\begin{document}
\title[Waring's problem for cubes]{On Waring's problem: two cubes\\ and two minicubes}
\author[Siu-lun Alan Lee]{Siu-lun Alan Lee$^*$}
\address{School of Mathematics, University of Bristol, University Walk,
 Bristol BS8 1TW, United Kingdom}
\email{sl5701@bris.ac.uk}
\begin{abstract}
We establish that almost every positive integer $n$ is the sum of
four cubes, two of which are at most $n^{\theta}$, as long as $\theta\geq192/869$.
An asymptotic formula for the number of such representations is established
when $1/4<\theta<1/3$.
\end{abstract}
\subjclass[2000]{11P05, 11P55}
\keywords{Waring's problem, sums of cubes, Hardy-Littlewood method}
\thanks{The author is supported by the University of Bristol Overseas Centenary
Postgraduate Research Scholarship.}
\maketitle
\section{Introduction}
\par Davenport proved in \cite{davenport1939} that almost every
natural number can be expressed as a sum of four positive integral
cubes. It is now known that when $N$ is sufficiently large, the number
of positive integers at most $N$ that fail to be written in such
a way is slightly smaller than $N^{37/42}$. Since any integer congruent
to 4 (mod 9) is never a sum of three cubes, the number of summands
here cannot in general be reduced. A heuristic argument shows, however,
that one of the four cubes is almost redundant. This motivates the
work of Br\"{u}dern and Wooley (see \cite{BrudernWooley2009}) on the
representation of almost all positive integers as a sum of four cubes,
one of which is small (henceforth we call this a \textit{minicube}).
They have shown that such a minicube can be as small as $n^{5/36}$
without obstructing the existence of representations. This raises
the question as to whether we can restrict not only one, but two (or
even more) of the cubes in such representation to be minicubes, and
still get an almost all result. The purpose of this paper is to investigate
representations of natural numbers by sums of four cubes, two of which
are small.
\par When $n$ is a positive integer and $\theta>0$, write $r_{\theta}(n)$
for the number of integral solutions to the equation
\begin{equation}
n=x_{1}^{3}+x_{2}^{3}+y_{1}^{3}+y_{2}^{3}\label{eq:r(n)}\end{equation}
where $x_{1},x_{2},y_{1},y_{2}$ are natural numbers satisfying $y_{1},y_{2}\leq n^{\theta}$.
Plainly any one of these variables satisfying this equation must be
at most $n^{1/3}$, so a trivial upper bound for $\theta$ is $1/3$.
A formal application of the circle method suggests that
\[
r_{\theta}(n)\sim\frac{\Gamma(4/3)^{2}}{\Gamma(2/3)}\mathfrak{S}(n)n^{2\theta-1/3},\]
with $\mathfrak{S}(n)$ being the familiar singular series associated
with the representation of positive integers as sums of four cubes.
Recalling the estimate $\mathfrak{S}(n)\gg1$ (see Exercise 3 of section
4.6 of \cite{vaughan1997}), we therefore anticipate that $r_{\theta}(n)\geq1$
as long as $n$ is large enough and $\theta>1/6$. We establish this
for almost all $n$, in section \ref{sec:Existence}, for values of
$\theta$ rather smaller than $2/9$.
\begin{thm}
\label{thm:existence}Whenever $\theta\geq192/869$, we have that
$r_{\theta}(n)\geq1$ for almost all integers $n$.
\end{thm}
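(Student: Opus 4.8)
The plan is to treat $r_{\theta}(n)$ by the circle method, exploiting that only a lower bound of the expected order is required, so that inconvenient terms may be discarded. Put $P=(n/4)^{1/3}$, $Q=n^{\theta}$, and for a small power $R=n^{\eta}$ set
\[
\mathfrak f(\alpha)=\sum_{\substack{x\le P\\ p\mid x\,\Rightarrow\,p\le R}}e(\alpha x^{3}),\qquad g(\alpha)=\sum_{y\le Q}e(\alpha y^{3}),
\]
so that, discarding the cubes with a large prime factor, $r_{\theta}(n)\ge\int_{0}^{1}\mathfrak f(\alpha)^{2}g(\alpha)^{2}e(-n\alpha)\,d\alpha$; passing to the smooth Weyl sum $\mathfrak f$ is what makes the strong moment estimates of Vaughan and Wooley for cubic smooth sums available. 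I would dissect $[0,1]$ into major arcs $\mathfrak M$ about $a/q$ with $q$ at most a small power of $n$, and minor arcs $\mathfrak m=[0,1]\setminus\mathfrak M$. On $\mathfrak M$ the standard apparatus yields the anticipated main term $c\,\mathfrak S(n)\,n^{2\theta-1/3}$ with $c>0$, together with an error $E(n)$ satisfying $\sum_{n\le N}|E(n)|^{2}=o\!\bigl(N^{1+2(2\theta-1/3)}\bigr)$; since $\mathfrak S(n)\gg1$, this supplies a contribution $\gg n^{2\theta-1/3}$ from the major arcs for all but $o(N)$ integers $n\le N$.

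It remains to show that $\varrho(n)=\int_{\mathfrak m}\mathfrak f^{2}g^{2}e(-n\alpha)\,d\alpha$ is $o\!\bigl(n^{2\theta-1/3}\bigr)$ for almost all $n\le N$, and a second moment estimate over $n$ --- Bessel's inequality for the Fourier coefficients of $\mathbf 1_{\mathfrak m}\mathfrak f^{2}g^{2}$ --- reduces this to proving
\[
\int_{\mathfrak m}|\mathfrak f(\alpha)|^{4}|g(\alpha)|^{4}\,d\alpha=o\!\bigl(N^{1/3+4\theta}\bigr).
\]
This mean value is the crux, and the crude estimates fall short: bounding $|g|\le Q$ and invoking Hua's inequality, or combining the Weyl bound $\sup_{\mathfrak m}|\mathfrak f|\ll P^{3/4+\varepsilon}$ with $\int_{0}^{1}|\mathfrak f|^{2}|g|^{4}\,d\alpha\ll n^{4\theta+\varepsilon}$ (valid because $\theta>1/6$), both overshoot the target by a fixed power of $n$. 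I would therefore split $\mathfrak m$ into dyadic pieces according to the denominator $q$ in the Dirichlet approximation to $\alpha$, equivalently by the size of $|\mathfrak f(\alpha)|$. On the pieces where $q$ is large, and $\mathfrak f$ correspondingly small, one couples Weyl's inequality for smooth cubes with near-optimal moment estimates for $\mathfrak f$ and the elementary bounds $\int_{0}^{1}|g|^{2j}\,d\alpha\ll Q^{j+\varepsilon}$ for $j=1,2,3$. On the pieces where $q$ is of intermediate size one replaces $\mathfrak f$ by its major-arc model $q^{-1}S(q,a)v(\alpha-a/q)$ and extracts additional cancellation by summing over the numerators $a$ coprime to $q$, i.e.\ by the Hardy--Littlewood--Kloosterman method, which is tailored to the ``sum of two cubes'' shape $\mathfrak f^{2}$. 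Balancing the permissible losses from these regimes against the exponent $1/3+4\theta$ produces linear inequalities in $\theta$, $\eta$ and the dissection parameters whose optimal solution is $\theta=192/869$; a cruder execution of the same scheme yields only values a little above $2/9$.

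The decisive obstacle is exactly this minor arc estimate, and within it the intermediate range of $q$. Having two minicubes rather than one means that the cube sum $\mathfrak f$ appears in the $L^{2}$ average only to the fourth power, leaving almost no margin for trading a pointwise bound against a higher moment of $\mathfrak f$; the governing contribution comes from $\alpha$ near rationals of moderate denominator, where $\mathfrak f$ and $g$ are simultaneously of moderate size, and it is the treatment of this range --- via the Kloosterman refinement and the attendant optimisation --- that fixes the admissible exponent at $192/869$ rather than at the heuristic $1/6$.
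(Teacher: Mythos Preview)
Your outline diverges from the paper in two essential respects, and as a result the claimed exponent $192/869$ is not supported by your argument.

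First, you apply the smooth-number restriction to the \emph{large} cubes and leave the minicubes classical. The paper does the opposite: it takes a classical Weyl sum $f$ on $(P,2P]$, a structured sum $K$ (built from products $pw$ with $p\equiv 2\pmod 3$) for the second large cube, and a \emph{smooth} Weyl sum $h$ for each minicube. Smoothing the minicubes is precisely what makes the key eighth-moment estimate
\[
\int_{0}^{1}|f(\alpha)^{2}h(\alpha)^{6}|\,\mathrm{d}\alpha\ll PR^{13/4-\tau}
\]
available (Lemma~2.1 of Br\"udern--Wooley). With classical $g$ in place of smooth $h$ you have only Hua-type bounds on $\int|g|^{6}$, and the exponent $13/4$ degrades to $7/2$; the resulting balance cannot reach $192/869$.

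Second, the specific constant $192/869$ in the paper comes not from any Kloosterman refinement but from combining the above eighth moment with the Br\"udern--Wooley minor-arc estimate
\[
I(Y)=\int_{\mathfrak m(PY^{3})}|f(\alpha)^{2}K(\alpha)^{6}|\,\mathrm{d}\alpha\ll P^{19/4-\tau/2}Y^{-3/4},\qquad Y=P^{11/79},
\]
via H\"older in the shape $\int_{\mathfrak m}|f^{2}K^{2}h^{4}|\le I(Y)^{1/3}\bigl(\int|f^{2}h^{6}|\bigr)^{2/3}$. The parameter $Y=P^{11/79}$ (optimised in Br\"udern--Wooley) feeds into the exponent $175/79$, and requiring $P^{175/79}R^{13/6}=o(PR^{4})$ with $R=P^{3\theta}$ gives exactly $\theta>192/869$. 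The sum $K$, with its prime structure, is indispensable for the $I(Y)$ bound; your scheme has no analogue of it. The remaining pruning of $\mathfrak p\setminus\mathfrak m$ into three further ranges uses major-arc models $f^{*}$ and mean values such as $\int|K|^{8}\ll P^{5}$ and $\int_{\mathfrak M(R)}|f^{*}|^{t}|h|^{6}\ll P^{t-3}R^{6}$, again not a Kloosterman device.

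In short, the mechanism you sketch (dyadic in $q$, Kloosterman on the intermediate range, moment bounds for the smooth large-cube sum) is not the mechanism that produces $192/869$, and your assertion that it does is unsubstantiated. To make the argument work you would need to smooth the minicubes, introduce the $K$-sum for one large cube, and invoke the Br\"udern--Wooley estimates for $I(Y)$ and for $\int|f^{2}h^{6}|$.
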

In some sense, the sum of two cubes and two minicubes at most $n^{\theta}$
employed in the representation \eqref{eq:r(n)} carry the same weight
as $2+6\theta$ cubes. Thus Theorem 1.1 asserts that almost every
natural number $n$ is the sum of at most $3.326$ cubes.
\par In section \ref{sec:An-asymptotic-formula}, we establish the asymptotic
formula for $r_{\theta}(n)$ in the following theorem.
\begin{thm}
\label{thm:asymptotic}Whenever $1/4<\theta<1/3$, the asymptotic
formula
\begin{equation}
r_{\theta}(n)=\frac{\Gamma(4/3)^{2}}{\Gamma(2/3)}\mathfrak{S}(n)n^{2\theta-1/3}+O(n^{2\theta-1/3}(\log n)^{-1})\end{equation}
holds for almost all positive integers $n$.
\end{thm}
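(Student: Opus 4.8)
\emph{Plan.} I would run the Hardy--Littlewood circle method for $n$ in a dyadic block $(X,2X]$ and show that the asymptotic formula fails for at most $o(X)$ such $n$; summation over dyadic blocks then yields the theorem for almost all $n$. Put $P=(2X)^{1/3}$, $f(\alpha)=\sum_{1\le x\le P}e(\alpha x^{3})$, and for each $n$ let $h_{n}(\alpha)=\sum_{1\le y\le n^{\theta}}e(\alpha y^{3})$, so that $r_{\theta}(n)=\int_{0}^{1}f(\alpha)^{2}h_{n}(\alpha)^{2}e(-n\alpha)\,d\alpha$. The cut-off $n^{\theta}$ varies with $n$; because $\theta<1/3$ this is dealt with by a sufficiently fine dissection of the range of $n^{\theta}$ over $(X,2X]$ together with the fact that, for almost all $n$, the minicubes falling in any one block of the dissection contribute only a lower-order amount. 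I suppress this routine point below and write $h=h_{n}$, $H\asymp X^{\theta}$.

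\emph{Major arcs.} Let $\mathfrak{M}$ be a standard system of narrow major arcs about the rationals $a/q$ with $q\le Q=P^{\nu}$ ($\nu>0$ small and fixed), and let $\mathfrak{m}$ be the minor arcs on which Weyl's inequality gives $\sup_{\alpha\in\mathfrak{m}}|f(\alpha)|\ll P^{3/4+\varepsilon}$ (see, e.g., \cite{vaughan1997}), the intermediate region of moderate denominators being pruned in the usual way. On $\mathfrak{M}$ one has the approximations $f(a/q+\beta)\approx q^{-1}S(q,a)v(\beta)$ and $h(a/q+\beta)\approx q^{-1}S(q,a)w(\beta)$, where $S(q,a)=\sum_{r\bmod q}e(ar^{3}/q)$, $v(\beta)=\int_{0}^{P}e(\beta t^{3})\,dt$ and $w(\beta)=\int_{0}^{n^{\theta}}e(\beta t^{3})\,dt$, and the hypothesis $\theta<1/3$ ensures $w(\beta)\asymp n^{\theta}$ throughout $\mathfrak{M}$. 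Assembling these and using the rapid convergence of the singular series $\mathfrak{S}(n)=\sum_{q\ge1}q^{-4}\sum_{(a,q)=1}S(q,a)^{4}e(-na/q)$ (the familiar series for four cubes, with $\mathfrak{S}(n)\gg1$) together with the smoothness of the singular integral $\mathfrak{J}(n)=\int_{-\infty}^{\infty}v(\beta)^{2}w(\beta)^{2}e(-n\beta)\,d\beta$, which a change of variable (legitimate since $\theta<1/3$) evaluates as $\tfrac{\Gamma(4/3)^{2}}{\Gamma(2/3)}n^{2\theta-1/3}\bigl(1+O(n^{3\theta-1})\bigr)$, one obtains
$$\int_{\mathfrak{M}}f^{2}h^{2}e(-n\alpha)\,d\alpha=\frac{\Gamma(4/3)^{2}}{\Gamma(2/3)}\mathfrak{S}(n)\,n^{2\theta-1/3}+O\bigl(n^{2\theta-1/3}(\log n)^{-A}\bigr)$$
for every fixed $A$, uniformly for $n\in(X,2X]$. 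This handles the major arcs with room to spare.

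\emph{Minor arcs.} It remains to control $\varrho(n)=\int_{\mathfrak{m}}f^{2}h^{2}e(-n\alpha)\,d\alpha$. By Bessel's inequality,
$$\sum_{X<n\le2X}|\varrho(n)|^{2}\le\int_{\mathfrak{m}}|f(\alpha)|^{4}|h(\alpha)|^{4}\,d\alpha,$$
so by Chebyshev's inequality it suffices to show the right-hand side is $O\bigl(X^{1/3+4\theta-\delta}\bigr)$ for some $\delta>0$: then the set of $n\in(X,2X]$ with $|\varrho(n)|>X^{2\theta-1/3}(\log X)^{-1}$ has size $O(X^{1-\delta})=o(X)$. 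To bound $\int_{\mathfrak{m}}|f|^{4}|h|^{4}$ I would combine Weyl's inequality for $f$ on $\mathfrak{m}$ with mean-value estimates for the mixed moments of $f$ and $h$, after a further dissection of $\mathfrak{m}$ according to the size of $h(\alpha)$ (on the part where $\alpha$ lies near a rational with denominator small relative to $H$ one invokes a secondary major-arc approximation for $h$, and on the rest Weyl's inequality applies to $h$ as well). The arithmetic of cubes enters through the observation that in the off-diagonal solutions counted by the relevant mixed mean values the two full cubes are forced to have size $O(H^{3/2})$; this is exactly where $\theta>1/4$ supplies the numerical margin needed to bring the estimate below $X^{1/3+4\theta}$, the condition $\theta<1/3$ again serving to keep the full cubes under control.

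\emph{Main obstacle.} The crux is precisely this minor-arc mean-value estimate: the most direct bounds lose a factor of about $X^{1/6}$ over what is needed, and recovering the required saving calls for a careful analysis of the mixed additive equations $x_{1}^{3}\pm y_{1}^{3}\pm y_{2}^{3}=x_{2}^{3}\pm y_{3}^{3}\pm y_{4}^{3}$ — by divisor-function bounds and the geometry of the loci $x^{3}-x'^{3}=D$ with $D$ small — exploiting both inequalities in $1/4<\theta<1/3$. Granting this, Chebyshev's inequality shows the exceptional set in each dyadic block has density tending to zero, and summation over the blocks completes the proof.
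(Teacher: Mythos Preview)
Your overall strategy is the right one and coincides with the paper's: circle method, Bessel's inequality on the minor arcs to reduce to a mean value $\int_{\mathfrak{m}}|f|^{4}|h|^{4}$, extraction of two copies of $|f|$ via Weyl's $P^{3/4+\varepsilon}$ bound, and then control of the mixed moment $\int_{0}^{1}|f|^{2}|h|^{4}$. The paper implements this through the device $\sigma_{\theta}(n)=\int(F^{2}-F_{0}^{2})G^{2}e(-n\alpha)\,d\alpha$ and the factorisation $F^{2}-F_{0}^{2}=f(F+F_{0})$, which manufactures the Weyl sum $f$ over $(P,2P]$; after Weyl extraction the remaining integral is $\int|F|^{2}|G|^{4}$, analytically the same object as yours.

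Where your sketch breaks down is precisely at the ``main obstacle'' you flag. Your heuristic --- that in the off-diagonal solutions of $x_{1}^{3}-x_{2}^{3}=\pm y_{1}^{3}\pm\cdots\pm y_{4}^{3}$ the full cubes are forced to satisfy $x_{i}\ll H^{3/2}$ --- is correct but \emph{vacuous} in the range at hand: with $H\asymp P^{3\theta}$ one has $H^{3/2}>P$ as soon as $\theta>2/9$, so throughout $\theta>1/4$ this gives nothing beyond $x_{i}\le P$. Divisor-function arguments of the kind you describe yield only $\int_{0}^{1}|f|^{2}|h|^{4}\ll P^{\varepsilon}(PH^{2}+H^{4})$, and after multiplying by $P^{3/2+\varepsilon}$ the term $P^{3/2+\varepsilon}H^{4}$ is never $o(PH^{4})$. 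You also have the roles of the two endpoints reversed: it is the \emph{diagonal} (Hua) term $PH^{2}$ that, after Weyl, forces $\theta>1/4$, while the off-diagonal is what forces $\theta<1/3$. The paper instead invokes the sharper estimate
\[
\int_{0}^{1}|f(\alpha)|^{2}|g(\alpha)|^{4}\,d\alpha\ll P^{\varepsilon}\bigl(PQ^{2}+P^{-1}Q^{9/2}\bigr)
\]
(Lemma~2.3 of Br\"udern--Wooley, recorded in the paper as Lemma~4.3), whose off-diagonal term $P^{-1}Q^{9/2}$ is exactly what makes the arithmetic close for $\theta<1/3$. This bound is not a consequence of naive divisor counting and is the genuine input you are missing.

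Two smaller points. The pruning of the intermediate arcs $\mathfrak{N}\setminus\mathfrak{P}$ is not quite ``the usual way'': the paper combines H\"older's inequality with a tailored major-arc moment $\int_{\mathfrak{N}}|F^{*}|^{7/2}|G|^{6}\ll P^{1/2}R^{6}$ (Lemma~4.6 here) and a Vaughan-type bound $\int_{\mathfrak{N}\setminus\mathfrak{P}}|F^{*}|^{5}\ll P^{2}L^{-1}$, and this step only saves a power of $\log P$, so your promised power saving $X^{-\delta}$ throughout is too optimistic (though a logarithmic saving suffices for the theorem). Finally, the major-arc error is not uniform in $n$ as you assert: completing the singular series already costs an exceptional set of density $O(L^{-1/16})$, which is why the paper's major-arc proposition is itself stated only for almost all $n$.
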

\par This result can be compared with Br\"{u}dern and Wooley's result (see
Theorem 1.2 of \cite{BrudernWooley2009}) on representations as sums
of three cubes and a minicube. Our range of permissible values of
$\theta$ is identical to that obtained in the latter paper.
\par We establish Theorems \ref{thm:existence} and \ref{thm:asymptotic}
using the Hardy-Littlewood method. We begin in section \ref{sec:Existence}
by laying the foundations for the application of this method. This
leads to a lower bound for the contribution from the major arcs. Some
auxiliary mean value estimates vital to the proof of Theorem \ref{thm:existence}
are then introduced. Bessel's inequality is used to relate the exceptional
set to a minor arc estimate. Following three pruning processes, the
proof of Theorem \ref{thm:existence} is complete. The derivation
of the asymptotic formula in Theorem \ref{thm:asymptotic} is covered
in section \ref{sec:An-asymptotic-formula} and essentially follows
by conventional means. 
\par Throughout this paper, we use $ $$\varepsilon$ to denote an arbitrarily
small positive constant. The implicit constants in Vinogradov's well-known
notations $\ll$ and $\gg$ will depend at most on $\varepsilon$.
Whenever $\varepsilon$ appears in a statement, either implicitly
or explicitly, we assert that the statement is true for each $\varepsilon>0$.
Note that the 'value' of $\varepsilon$ will change from statement
to statement. The letter $\varpi$ always denotes a prime, and any
variable denoted by the letter $p$ (with or without subscripts) will
be a prime that is congruent to 2 (mod 3). As usual, write $e(z)=e^{2\pi iz}$.
\par The author would like to thank Trevor Wooley for his guidance and
comments during the course of this research.
\section{\label{sec:Existence}Existence of representations}
We begin our proof of Theorem \ref{thm:existence} by introducing
the basic ingredients for the application of the Hardy-Littlewood
method. 
\par Fix a large integer $N$. Let $\theta$ be a positive number with
$\theta\leq1/3$. Define
\begin{equation}
P=(N/4)^{1/3},\qquad R=P^{3\theta},\qquad Y=P^{11/79},\qquad L=(\log P)^{10}.\label{eq:parameters for existence}\end{equation}
We take $\eta$ to be a sufficiently small (but fixed) positive number,
and then define the set of smooth numbers
\begin{equation}
\mathcal{A}(R)=\{m\in[1,R]\cap\mathbb{Z}:\varpi\text{ prime and }\varpi|m\Rightarrow\varpi\leq R^{\eta}\}.\label{eq:smooth numbers}\end{equation}
Also, when $\alpha\in[0,1)$, define the
generating functions
\begin{equation}
f(\alpha)=\sum_{P<x\leq2P}e(\alpha x^{3})\qquad\text{and}\qquad h(\alpha)=\sum_{y\in\mathcal{A}(R)}e(\alpha y^{3}).\label{eq:f}\end{equation}
When $X$ and $Z$ are positive numbers, define
\[
\mathcal{A}^{*}(X,Z)=\{n\in\mathbb{Z}\cap[1,X]:\varpi|n\Rightarrow\varpi\leq Z^{\eta}\}.\]
Put $\mathcal{B}(X,Z)=\mathcal{A}^{*}(2X,Z)\backslash\mathcal{A}^{*}(X,Z)$.
Note that $\mathcal{A}(X)=\mathcal{A}^{*}(X,X)$. Fix $\tau>0$ with
the property that $\tau^{-1}>852+16\sqrt{2833}\approx1703.6$. Define
$J=\lfloor\frac{1}{2}\tau\log P\rfloor$, and when $\alpha\in\mathbb{R},$
write
\begin{equation}
K(\alpha)=\sum_{2^{-J}Y<p\leq Y}\sum_{w\in\mathcal{B}(P/p,2P/Y)}e(\alpha p^{3}w^{3}).\label{eq:K}\end{equation}
\par For all $\theta>0$ and integers $n$ with $N<n\leq2N$,
let $\rho_{\theta}(n)$ denote the number of integral solutions to
the equation
\begin{equation}
n=x^{3}+(pw)^{3}+y_{1}^{3}+y_{2}^{3},\label{eq:rho(n) for existence}\end{equation}
with
\[P<x\leq2P,\qquad2^{-J}Y<p\leq Y,\qquad w\in\mathcal{B}(P/p,2P/Y),\qquad y_{1},y_{2}\in\mathcal{A}(R).\]
It is apparent that $r_{\theta}(n)\geq\rho_{\theta}(n)$, and our
goal is to establish a lower bound for $\rho_{\theta}(n)$ that produces
the desired lower bound for $r_{\theta}(n)$. To this end, for any
measurable subset $\mathfrak{B}$ of $[0,1),$ define
\begin{equation}
\rho_{\theta}(n;\mathfrak{B})=\int_{\mathfrak{B}}f(\alpha)K(\alpha)h(\alpha)^{2}e(-n\alpha)\:\mathrm{d}\alpha.\label{eq:rho(n,B) for existence}\end{equation}
By orthogonality, we have $\rho_{\theta}(n)=\rho_{\theta}(n;[0,1))$
for all integers $n$ with $N<n\leq2N$. 
\par We analyse this integral using the Hardy-Littlewood method. When $a\in\mathbb{Z}$
and $q\in\mathbb{N}$ satisfy $0\leq a\leq q\leq L$ and $(a,q)=1$,
define
\begin{equation}
\mathfrak{P}(q,a)=\{\alpha\in[0,1):|\alpha-a/q|\leq LN^{-1}\}.\label{eq:P(q,a)}\end{equation}
In addition, for any positive number $X$, when $a\in\mathbb{Z}$ and $q\in\mathbb{N}$ satisfy $0\leq a\leq q\leq X$
and $(a,q)=1$, define
\begin{equation}
\mathfrak{M}(q,a;X)=\{\alpha\in[0,1):|q\alpha-a|\leq XP^{-3}\}.\label{eq:M(q,a;X)}\end{equation}
With this in mind, we define the \textit{major arcs} $\mathfrak{P}$
to be the union of the arcs $\mathfrak{P}(q,a)$ with $a\in\mathbb{Z}$,
$q\in\mathbb{N}$ satisfying $0\leq a\leq q\leq L$ and $(a,q)=1$.
Similarly, when $1\leq X\leq N^{1/2}$, define the major arcs $\mathfrak{M}(X)$
to be the union of the arcs $\mathfrak{M}(q,a;X)$ with $a\in\mathbb{Z}$
and $q\in\mathbb{N}$ satisfying $0\leq a\leq q\leq X$ and $(a,q)=1$.
Their respective complements in $[0,1)$ are the \textit{minor arcs}
$\mathfrak{p}$ and $\mathfrak{m}(X)$. The major arcs $\mathfrak{P}$
are of central interest in our argument, with $\mathfrak{M}(X)$ employed
as a tool for pruning the minor arc $\mathfrak{p}$ later.
\section{Major arc estimate}
The familiar approach to estimating the major arc contribution $\rho_{\theta}(n;\mathfrak{P})$,
which we largely follow, is to approximate the generating functions
in the integrand of \eqref{eq:rho(n,B) for existence} by some suitably
well-behaved functions. First, when $a\in\mathbb{Z}$ and $q\in\mathbb{N}$,
let
\begin{equation}
S(q,a)=\sum_{r=1}^{q}e(ar^3/q).\label{eq:S(q,a)}\end{equation}
Also, when $\beta$ is a real number and $Z$ is a positive number, write
\begin{equation}
v(\beta;Z)=\int_{Z}^{2Z}e(\beta\gamma^{3})\:\mathrm{d}\gamma.\label{eq:v(beta)}\end{equation}
In particular, write $v(\beta)$ for $v(\beta;P)$. Recall from Theorem 4.1 of \cite{vaughan1997} that when $\alpha\in\mathbb{R}$,
$a\in\mathbb{Z}$ and $q\in\mathbb{N}$, we have
\begin{equation}
f(\alpha)=q^{-1}S(q,a)v(\alpha-a/q)+O(q^{1/2+\varepsilon}(1+P^{3}|\alpha-a/q|)^{1/2}).\label{eq:approx. for f in general}\end{equation}
In particular, for all $\alpha\in\mathfrak{P}(q,a)\subseteq\mathfrak{P}$,
one obtains from \eqref{eq:P(q,a)} the relation
\begin{equation}
f(\alpha)=q^{-1}S(q,a)v(\alpha-a/q)+O(L^{1+\varepsilon}).\label{eq:approx for f on P(q,a)}\end{equation}
Similarly, it follows from Lemma 8.5 of \cite{wooley1991} that for
all $\alpha\in\mathfrak{P}(q,a)\subseteq\mathfrak{P}$, 
\begin{equation}
h(\alpha)=q^{-1}S(q,a)h(0)+O(RL^{-5}).\label{eq:approx for h on P(q,a)}\end{equation}
As in the argument on p.13 of \cite{BrudernWooley2009}, it is a consequence
of this lemma that there exists a positive constant $C$ with the
property that for all $\alpha\in\mathfrak{P}(q,a)\subseteq\mathfrak{P}$,
\begin{equation}
K(\alpha)=Cq^{-1}S(q,a)v(\alpha-a/q)+O(PL^{-5}).\label{eq:approx for K on P(q,a)}\end{equation}
\par When $\beta\in\mathbb{R}$, define
\begin{equation}
u(\beta)=Ch(0)^{2}v(\beta)^{2}.\label{eq:u(beta)}\end{equation}
Successive applications of \eqref{eq:approx for f on P(q,a)}, \eqref{eq:approx for h on P(q,a)}
and \eqref{eq:approx for K on P(q,a)} then yield the relation
\begin{equation}
f(\alpha)K(\alpha)h(\alpha)^{2}=\Big(q^{-1}S(q,a)\Big)^{4}u(\alpha-a/q)+O(P^{2}R^{2}L^{-5})\label{eq:fKh^2}\end{equation}
for all $\alpha\in\mathfrak{P}(q,a)$. For all positive integers $q$
and $n$, write
\begin{equation}
A(q,n)=\sum_{\substack{a=1\\
(a,q)=1}
}^{q}\Big(q^{-1}S(q,a)\Big)^{-4}e(-an/q).\label{eq:A(q,n)}\end{equation}
In addition, when $n$ is a natural number, define
\begin{equation}
\mathfrak{S}(n;L)=\sum_{1\leq q\leq L}A(q,n)\label{eq:truncated singular series}\end{equation}
and
\begin{equation}
J(n;L)=\int_{-L/N}^{L/N}u(\beta)e(-n\beta)\mathrm{\: d}\beta.\label{eq:truncated singular integral}\end{equation}
Note that the measure of $\mathfrak{P}$ is $O(L^{3}/N)$, so integrating
both sides of \eqref{eq:fKh^2} against $e(-n\alpha)$ over $\alpha\in\mathfrak{P}$
yields
\begin{equation}
\rho_{\theta}(n;\mathfrak{P})=\mathfrak{S}(n;L)J(n;L)+O(P^{-1}R^{2}L^{-2}).\label{eq:truncated asymptotic for existence}\end{equation}
\par Next recall the estimate
\begin{equation}
v(\beta)\ll P(1+P^{3}|\beta|)^{-1},\label{eq:estimate for v(beta)}\end{equation}
obtained via integration by parts. This ensures that the completed
singular integral
\begin{equation}
J(n)=\int_{-\infty}^{\infty}u(\beta)e(-n\beta)\:\mathrm{d}\beta\label{eq:J(n)}\end{equation}
converges absolutely and uniformly in $n$. Also, since $h(0)\ll R$,
we have
\begin{equation}
J(n)-J(n;L)\ll R^{2}P^{2}\int_{L/N}^{\infty}(1+P^{3}\beta)^{-2}\:\mathrm{d}\beta\ll R^{2}P^{-1}L^{-1}.\label{eq:tail of singular integral}\end{equation}
Finally, the value of the singular integral can be computed to be
\begin{equation}
J(n)=Ch(0)^{2}\frac{\Gamma(4/3)^{2}}{\Gamma(2/3)}n^{-1/3},\label{eq:value of J(n)}\end{equation}
in accordance with the methods outlined on pages 21 and 22 of \cite{davenport2005}.
\par Meanwhile, Theorem 4.3 of \cite{vaughan1997} ensures that the singular
series
\begin{equation}
\mathfrak{S}(n)=\sum_{q=1}^{\infty}A(q,n)\label{eq:singular series}\end{equation}
converges absolutely and uniformly in $n$. Also, equation (1.3) of
\cite{kawada1996} shows that $1\ll\mathfrak{S}(n)\ll(\log\log n)^{4}$.
In addition, the argument on p.14 of \cite{BrudernWooley2009} demonstrates
that
\begin{equation}
\mathfrak{S}(n)-\mathfrak{S}(n;L)\ll L^{-1/16}\label{eq:tail of singular series}\end{equation}
for all but $O(NL^{-1/16})$ integers $n$ with $N<n\leq2N$. Finally,
$|\mathfrak{S}(n;L)|\gg1$ for all but $O(NL^{-1/16})$ integers $n$
with $N<n\leq2N$. 
\par Equations \eqref{eq:truncated asymptotic for existence}, \eqref{eq:tail of singular integral},
\eqref{eq:tail of singular series} and \eqref{eq:value of J(n)}
together thus lead to the asymptotic lower bound
\begin{equation}
\rho_{\theta}(n;\mathfrak{P})\gg\mathfrak{S}(n)n^{2\theta-1/3}+O(n^{2\theta-1/3}(\log n)^{-1/16}),\label{eq:asymptotic for existence}\end{equation}
valid for all integers $n$ with $N<n\leq2N$, with at most $O(N(\log N)^{-1/16})$
exceptions. We summarise this conclusion in the following proposition.
\begin{prop}
\label{pro:major arc estimate for existence}For all but $O(N(\log N)^{-1/16})$
integers $n$ with $N<n\leq2N$, we have $\rho_{\theta}(n;\mathfrak{P})\gg n^{2\theta-1/3}$.
\end{prop}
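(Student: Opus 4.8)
The proposition is essentially a restatement of the estimate \eqref{eq:asymptotic for existence}: once one knows that $\rho_\theta(n;\mathfrak P)\gg\mathfrak S(n)n^{2\theta-1/3}+O(n^{2\theta-1/3}(\log n)^{-1/16})$ for all but $O(N(\log N)^{-1/16})$ integers $n$ with $N<n\le 2N$, it suffices to invoke the lower bound $\mathfrak S(n)\gg 1$ — valid for every $n$ by equation (1.3) of \cite{kawada1996} — and to absorb the error term, which is $o(n^{2\theta-1/3})$, into the main term; this yields $\rho_\theta(n;\mathfrak P)\gg n^{2\theta-1/3}$ for the same set of $n$.

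So the real content is the derivation of \eqref{eq:asymptotic for existence}, which I would assemble from the estimates of this section as follows. The starting point is \eqref{eq:truncated asymptotic for existence}, giving $\rho_\theta(n;\mathfrak P)=\mathfrak S(n;L)J(n;L)+O(P^{-1}R^{2}L^{-2})$. First I would pass from the truncated singular integral to the completed one: \eqref{eq:tail of singular integral} and \eqref{eq:value of J(n)} give
\[
J(n;L)=Ch(0)^{2}\frac{\Gamma(4/3)^{2}}{\Gamma(2/3)}n^{-1/3}+O(R^{2}P^{-1}L^{-1}).
\]
Since $h(0)=|\mathcal A(R)|$ counts the $R^{\eta}$-smooth integers in $[1,R]$ and $\eta$ is fixed, a standard smooth-number count gives $h(0)\gg R$; recalling $R=P^{3\theta}$ and $n\asymp N\asymp P^{3}$, we get $J(n;L)\gg R^{2}P^{-1}\asymp n^{2\theta-1/3}$, and the error $O(R^{2}P^{-1}L^{-1})$ is smaller by the power $L=(\log P)^{10}$. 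Next, by \eqref{eq:tail of singular series} and the lower bound $|\mathfrak S(n;L)|\gg 1$ stated just after it — each failing for at most $O(NL^{-1/16})$ integers $n$ in $(N,2N]$ — together with $\mathfrak S(n)\gg 1$, one has $\mathfrak S(n;L)=\mathfrak S(n)+O(L^{-1/16})$ with $\mathfrak S(n)\gg 1$ off an exceptional set of size $O(NL^{-1/16})$. Feeding these two estimates into \eqref{eq:truncated asymptotic for existence} and checking that each error term — the $O(P^{-1}R^{2}L^{-2})$ in that formula, the $O(R^{2}P^{-1}L^{-1})$ from the singular integral, and the $O(R^{2}P^{-1}L^{-1/16})$ from the singular series — is dominated by $R^{2}P^{-1}$ by a negative power of $L$, one arrives at \eqref{eq:asymptotic for existence}; since $\log P\asymp\log N$, the exceptional set is of size $O(N(\log N)^{-1/16})$.

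As all the ingredients are already established earlier in the paper, there is no genuine obstacle here; the points that deserve a word are the lower bound $h(0)\gg R$, which guarantees the main term is of order $n^{2\theta-1/3}$ and not merely $\ll n^{2\theta-1/3}$, and the accounting of the exceptional set, which is governed entirely by the two assertions about $\mathfrak S(n;L)$ following \eqref{eq:tail of singular series}.
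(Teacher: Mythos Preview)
Your proposal is correct and follows essentially the same route as the paper: the proposition is obtained by combining \eqref{eq:truncated asymptotic for existence}, \eqref{eq:tail of singular integral}, \eqref{eq:value of J(n)} and \eqref{eq:tail of singular series} (together with the lower bound $|\mathfrak S(n;L)|\gg 1$ stated immediately after it), exactly as you describe. Your explicit mention of the smooth-number lower bound $h(0)\gg R$ is a point the paper leaves implicit, but the argument is otherwise identical.
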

\section{Auxiliary estimates}
We now establish several mean value estimates of generating functions
that are required in the evaluation of the minor arc contribution
$\rho_{\theta}(n;\mathfrak{p})$ in the following section. 
\par When $\alpha\in\mathbb{R}$ and $Q\geq1$, write
\begin{equation}
g(\alpha)=\sum_{Q<y\leq2Q}e(\alpha y^{3}).\label{eq:definition of g}\end{equation}
We record for future reference the following lemma.
\begin{lem}
\label{lem:mean value of g^2h^6}Whenever $R\leq Q^{2/3}$, we have
\[
\int_{0}^{1}|g(\alpha)^{2}h(\alpha)^{6}|\:\mathrm{d}\alpha\ll QR^{13/4-\tau}.\]
\end{lem}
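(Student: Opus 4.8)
The plan is to reduce the integral, via orthogonality and the hypothesis $R\le Q^{2/3}$, to a pure sixth moment of $h$. By orthogonality, $\int_{0}^{1}|g(\alpha)^{2}h(\alpha)^{6}|\,\mathrm{d}\alpha=\int_{0}^{1}|g(\alpha)|^{2}|h(\alpha)|^{6}\,\mathrm{d}\alpha$ is the number of integral solutions of
\[
u_{1}^{3}+y_{1}^{3}+y_{2}^{3}+y_{3}^{3}=u_{2}^{3}+y_{4}^{3}+y_{5}^{3}+y_{6}^{3}
\]
with $Q<u_{1},u_{2}\le 2Q$ and $y_{1},\dots,y_{6}\in\mathcal{A}(R)$.

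First I would show that every such solution has $u_{1}=u_{2}$. If not, then since $u_{1},u_{2}>Q$ we have $u_{1}^{2}+u_{1}u_{2}+u_{2}^{2}>3Q^{2}$, so
\[
|u_{1}^{3}-u_{2}^{3}|=|u_{1}-u_{2}|(u_{1}^{2}+u_{1}u_{2}+u_{2}^{2})>3Q^{2}\ge 3R^{3},
\]
using $R\le Q^{2/3}$ in the last step; but each $y_{i}$ lies in $[1,R]$, so the equation forces $|u_{1}^{3}-u_{2}^{3}|=|y_{4}^{3}+y_{5}^{3}+y_{6}^{3}-y_{1}^{3}-y_{2}^{3}-y_{3}^{3}|<3R^{3}$, a contradiction. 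Hence the off-diagonal part is empty, and summing the diagonal over the $O(Q)$ admissible values of $u_{1}$ gives
\[
\int_{0}^{1}|g(\alpha)|^{2}|h(\alpha)|^{6}\,\mathrm{d}\alpha\ll Q\int_{0}^{1}|h(\alpha)|^{6}\,\mathrm{d}\alpha,
\]
because $\int_{0}^{1}|h(\alpha)|^{6}\,\mathrm{d}\alpha$ counts exactly the solutions of $y_{1}^{3}+y_{2}^{3}+y_{3}^{3}=y_{4}^{3}+y_{5}^{3}+y_{6}^{3}$ with $y_{i}\in\mathcal{A}(R)$.

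It then remains to insert the sixth moment bound $\int_{0}^{1}|h(\alpha)|^{6}\,\mathrm{d}\alpha\ll R^{13/4-\tau}$ for the smooth cubic Weyl sum. The hard part is really this last estimate: it is the one genuinely deep ingredient, and the choice of $\tau$ with $\tau^{-1}>852+16\sqrt{2833}$ is made precisely so that a sixth moment saving of this strength is available — I would quote it from the established theory of Waring's problem for cubes rather than reprove it. Everything else is the elementary observation that $R\le Q^{2/3}$ is exactly the threshold killing the off-diagonal contribution; given that, one obtains $\int_{0}^{1}|g(\alpha)|^{2}|h(\alpha)|^{6}\,\mathrm{d}\alpha\ll Q R^{13/4-\tau}$ at once.
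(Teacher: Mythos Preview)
Your argument is correct and is precisely the standard proof underlying this estimate. The paper itself gives no proof at all, merely citing the result as Lemma~2.1 of Br\"udern--Wooley~\cite{BrudernWooley2009}; the argument there is exactly what you sketch --- use $R\le Q^{2/3}$ to force $u_1=u_2$, reducing to $Q\int_0^1|h(\alpha)|^6\,\mathrm d\alpha$, and then invoke the sixth moment bound $\int_0^1|h(\alpha)|^6\,\mathrm d\alpha\ll R^{13/4-\tau}$ for the smooth cubic Weyl sum (due to Wooley~\cite{wooley1995}, and for which the constraint on $\tau$ is tailored).
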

\begin{proof}
This is Lemma 2.1 of \cite{BrudernWooley2009}.
\end{proof}
This gives rise to the following corollaries.
\begin{cor}
\label{lem:mean value of f^2h^6}Let
\[
T_{1}=\int_{0}^{1}|f(\alpha)^{2}h(\alpha)^{6}|\;\mathrm{d}\alpha\qquad\text{and}\qquad T_{2}=\int_{0}^{1}|K(\alpha)^{2}h(\alpha)^{6}|\;\mathrm{d}\alpha\]
Then whenever $R\leq(P/2)^{2/3}$, we have
\[
T_{1}\ll PR^{13/4-\tau}\qquad\text{and}\qquad T_{2}\ll PR^{13/4-\tau}.\]
\end{cor}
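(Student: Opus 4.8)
The plan is to derive both estimates from Lemma~\ref{lem:mean value of g^2h^6}. For $T_{1}$ the deduction is immediate: the hypothesis $R\leq(P/2)^{2/3}$ in particular yields $R\leq P^{2/3}$, so we may take $Q=P$ in Lemma~\ref{lem:mean value of g^2h^6}, in which case the generating function $g$ there is precisely $f$, and hence
\[
T_{1}=\int_{0}^{1}|f(\alpha)^{2}h(\alpha)^{6}|\;\mathrm{d}\alpha\ll PR^{13/4-\tau}.
\]

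For $T_{2}$ I would first observe that $K(\alpha)$ is, in fact, a subsum of $f(\alpha)$. If $(p,w)$ is a pair contributing to the sum \eqref{eq:K}, then $w\in(P/p,2P/p]$, so $u:=pw$ is an integer with $P<u\leq2P$; moreover every prime divisor of $w$ is at most $(2P/Y)^{\eta}$, whereas $p>2^{-J}Y$. Since $Y=P^{11/79}$ and $J=\lfloor\tfrac{1}{2}\tau\log P\rfloor$, one has $2^{-J}Y\geq P^{11/79-\frac{1}{2}\tau\log 2}$ while $(2P/Y)^{\eta}\ll P^{68\eta/79}$, so that $(2P/Y)^{\eta}<2^{-J}Y\leq p$ provided $\eta$ is fixed and small enough. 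Consequently $p$ exceeds every prime factor of $w$, so $p$ is the largest prime factor of $u$, and the pair $(p,w)=(p,u/p)$ is determined by $u$ alone. It follows that there is a set $\mathcal{S}\subseteq\mathbb{Z}\cap(P,2P]$ for which
\[
K(\alpha)=\sum_{u\in\mathcal{S}}e(\alpha u^{3}).
\]

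It remains to compare $T_{2}$ with $T_{1}$ by a positivity argument. Write $\Psi(m)=\int_{0}^{1}e(\alpha m)|h(\alpha)|^{6}\;\mathrm{d}\alpha$; expanding $|K(\alpha)|^{2}$ and integrating termwise gives
\[
T_{2}=\sum_{u_{1}\in\mathcal{S}}\sum_{u_{2}\in\mathcal{S}}\Psi(u_{1}^{3}-u_{2}^{3}).
\]
For each integer $m$, the quantity $\Psi(m)$ equals the number of integral solutions of $y_{1}^{3}+y_{2}^{3}+y_{3}^{3}-y_{4}^{3}-y_{5}^{3}-y_{6}^{3}=-m$ with $y_{1},\dots,y_{6}\in\mathcal{A}(R)$, and is therefore non-negative. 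Since $\mathcal{S}\subseteq\mathbb{Z}\cap(P,2P]$, enlarging both ranges of summation to $\mathbb{Z}\cap(P,2P]$ only increases the sum, whence
\[
T_{2}\leq\sum_{P<u_{1}\leq2P}\;\sum_{P<u_{2}\leq2P}\Psi(u_{1}^{3}-u_{2}^{3})=\int_{0}^{1}|f(\alpha)^{2}h(\alpha)^{6}|\;\mathrm{d}\alpha=T_{1}\ll PR^{13/4-\tau}.
\]

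The only point requiring genuine care is the uniqueness of the factorisation $u=pw$, which rests on the inequality $(2P/Y)^{\eta}<2^{-J}Y$; with the present parameters this amounts to $\eta<11/68-O(\tau)$, and is guaranteed by taking $\eta$ small enough at the outset, as has already been assumed. Everything else --- the specialisation $Q=P$ in Lemma~\ref{lem:mean value of g^2h^6} and the non-negativity of the representation function $\Psi$ --- is entirely routine.
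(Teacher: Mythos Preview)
Your proof is correct and follows essentially the same approach as the paper: deduce $T_{1}$ directly from Lemma~\ref{lem:mean value of g^2h^6} with $Q=P$, and for $T_{2}$ show that the map $(p,w)\mapsto pw$ is injective (so $K$ is supported on a subset of $(P,2P]$), whence $T_{2}\leq T_{1}$ by non-negativity of the underlying representation function. Your write-up is in fact more careful than the paper's, which contains a couple of harmless slips (it writes $Q=P/2$ in place of $Q=P$, and $P/2<m\leq P$ in place of $P<m\leq 2P$).
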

\begin{proof}
The first inequality is immediate from Lemma \ref{lem:mean value of g^2h^6}
on taking $Q=P/2$. To estimate the latter mean value, observe that
if $P/2<m\leq P$, and $m=pw$ for some prime $p$ and integer $w$
occuring in the summations of \eqref{eq:K}, then since $2^{-J}Y>R^{\eta},$
the pair $(p,w)$ is uniquely defined. Hence, by orthogonality, it
follows on considering the underlying diophantine equations that $T_{2}\leq T_{1}$.
The required result thus again follows from Lemma \eqref{lem:mean value of g^2h^6}.
\end{proof}
We also quote the following useful lemma.
\begin{lem}
\label{lem:mean value of f^2g^4}Whenever $1\leq Q\leq P$, we have
\[
\int_{0}^{1}|f(\alpha)^{2}g(\alpha)^{4}|\:\mathrm{d}\alpha\ll P^{\varepsilon}(PQ^{2}+P^{-1}Q^{9/2}).\]
\end{lem}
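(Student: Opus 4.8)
The plan is to pass to a counting problem by orthogonality and then extract a diagonal main term. Since $|f(\alpha)^2g(\alpha)^4|=|f(\alpha)g(\alpha)^2|^2$, the integral equals the number $N$ of solutions of $x_1^3+y_1^3+y_2^3=x_2^3+y_3^3+y_4^3$ with $P<x_1,x_2\le 2P$ and $Q<y_1,y_2,y_3,y_4\le 2Q$; equivalently $N=\sum_m\mathcal N(m)^2$ where $\mathcal N(m)=\#\{(x,y_1,y_2):x^3+y_1^3+y_2^3=m\}$. First I would split off the contribution $N_0$ with $x_1=x_2$: here $x_1$ is free ($\ll P$ choices) and $y_1^3+y_2^3=y_3^3+y_4^3$; using the elementary estimate that a positive integer has $O(m^\varepsilon)$ representations as $a^3+b^3$ (because $a^3+b^3=(a+b)(a^2-ab+b^2)$ forces $a+b\mid m$, and $a+b$ then fixes $ab$, hence $a,b$), the number of such quadruples is $O(Q^{2+\varepsilon})$, so $N_0\ll PQ^{2+\varepsilon}\ll P^{1+\varepsilon}Q^2$, well within the asserted bound.

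For the remainder $N_1$ (the solutions with $x_1\ne x_2$) I would exploit the gap $|x_1^3-x_2^3|\ge 3P^2$ for distinct $x_1,x_2\in(P,2P]$, which already shows $N_1=0$ unless $Q^3\gg P^2$. Writing $x_1^3-x_2^3=y_1^3+y_2^3-y_3^3-y_4^3=:n$, the admissible $n$ have $1\le|x_1-x_2|\ll Q^3P^{-2}$, so there are only $O(Q^3/P)$ of them, and each non-zero $n$ has $O(P^\varepsilon)$ representations $x_1^3-x_2^3=n$ (again a divisor bound). Pairing this with the bound $O(Q^{2+\varepsilon})$ for the number of quadruples $(y_1,\dots,y_4)$ producing a given $n$ yields the preliminary estimate $N_1\ll P^\varepsilon(PQ^2+P^{-1}Q^5)$. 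To improve the secondary exponent from $5$ to $9/2$ I would bring in the mean-value estimates $\int_0^1|f|^8\,\mathrm{d}\alpha\ll P^{5+\varepsilon}$ and $\int_0^1|g|^8\,\mathrm{d}\alpha\ll Q^{5+\varepsilon}$ (Hua's inequality), together with $\int_0^1|f|^4\,\mathrm{d}\alpha\ll P^{2+\varepsilon}$ and $\int_0^1|g|^4\,\mathrm{d}\alpha\ll Q^{2+\varepsilon}$: a Hardy--Littlewood dissection of $[0,1)$ — with the minor arcs handled via Weyl's inequality for $f$ and a fourth/eighth moment for $g$, and the major arcs evaluated through the usual approximations $f(\alpha)\approx q^{-1}S(q,a)v(\alpha-a/q)$ and $g(\alpha)\approx q^{-1}S(q,a)v(\alpha-a/q;Q)$ — then reduces $N_1$ to a combination of these inputs which, optimised, gives $N_1\ll P^\varepsilon(PQ^2+P^{-1}Q^{9/2})$.

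The main obstacle is precisely this last optimisation, namely recovering the sharp exponent $9/2$ rather than the $5$ (or the $4$ with a $P^{-1/2}$ in place of $P^{-1}$, obtained from a Cauchy--Schwarz estimate over the sparse set of admissible differences $n$ against $\int_0^1|g|^8\,\mathrm{d}\alpha$) that fall out of the soft counting. That no purely elementary argument will do is clearest at $Q=P$, where $g=f$ and the asserted inequality becomes $\int_0^1|f(\alpha)|^6\,\mathrm{d}\alpha\ll P^{7/2+\varepsilon}$ — the classical sixth-moment bound for the cubic Weyl sum, itself a genuine consequence of the eighth-moment case of Hua's inequality. One therefore has to interpolate uniformly in $Q$ between the diagonal saving that drives the $PQ^2$ term and this Hua-type input that drives the $P^{-1}Q^{9/2}$ term; the estimate is by now a standard one in the circle-method treatment of sums of cubes, and in the present paper it is quoted rather than reproved.
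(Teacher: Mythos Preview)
You correctly observe in your last paragraph that the paper does not reprove this estimate: its entire proof is the single sentence ``This is the first estimate of Lemma 2.3 of \cite{BrudernWooley2009}.'' Your sketch therefore goes well beyond what the paper offers, and the parts you make precise are correct: the diagonal contribution $N_0\ll P^{1+\varepsilon}Q^2$, the preliminary off-diagonal bound $N_1\ll P^{-1+\varepsilon}Q^5$, and the Cauchy--Schwarz variant $N_1\ll P^{-1/2+\varepsilon}Q^4$ all follow exactly as you describe.

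The genuine gap is the step you yourself flag as the main obstacle: upgrading the off-diagonal bound to $P^{-1}Q^{9/2}$. Your claim that a Hardy--Littlewood dissection, with minor arcs handled by Weyl's inequality for $f$ and fourth/eighth moments for $g$, ``optimised'' gives the target does not survive scrutiny. A direct Weyl bound $\sup_{\mathfrak n}|f|\ll P^{3/4+\varepsilon}$ together with $\int_0^1|g|^4\,\mathrm{d}\alpha\ll Q^{2+\varepsilon}$ yields only $P^{3/2+\varepsilon}Q^2$ on the minor arcs, which already exceeds $PQ^2$ for every $Q<P$; and a short calculation shows that \emph{every} H\"older combination of the four moments $\int|f|^4$, $\int|f|^8$, $\int|g|^4$, $\int|g|^8$ is bounded below by $P^{1+\varepsilon}Q^{5/2}$, which matches the asserted bound only at the endpoint $Q=P$. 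The sharper exponent $9/2$ genuinely requires Weyl-differencing $|f|^2$ into quadratic exponential sums and the ensuing finer Diophantine counting, as in Vaughan's treatment of the sixth moment of cubic Weyl sums on which \cite{BrudernWooley2009} builds; it is not recoverable from the soft moment inputs you list. So your outline correctly isolates the obstacle but does not overcome it.
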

\begin{proof}
This is the first estimate of Lemma 2.3 of \cite{BrudernWooley2009}.
\end{proof}
The following result is a direct consequence of this lemma.
\begin{cor}
\label{lem:estimate for K^2h^4}Whenever $R\leq(P/2)^{2/3}$, we have
the estimate
\[
\int_{0}^{1}|K(\alpha)^{2}h(\alpha)^{4}|\:\mathrm{d}\alpha\ll P^{1+\varepsilon}R^{2}.\]
\end{cor}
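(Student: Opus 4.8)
The plan is to bound the integral by the mean value of Lemma~\ref{lem:mean value of f^2g^4}, after first stripping the prime $p$ out of $K$ and then replacing $h$ by dyadic pieces of $g$.

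First I would prove that
\[
\int_{0}^{1}|K(\alpha)^{2}h(\alpha)^{4}|\:\mathrm{d}\alpha\leq\int_{0}^{1}|f(\alpha)^{2}h(\alpha)^{4}|\:\mathrm{d}\alpha.
\]
As in the proof of Corollary~\ref{lem:mean value of f^2h^6}, the quantity $2^{-J}Y$ is a positive power of $P$, so once $\eta$ is taken small enough it exceeds $(2P/Y)^{\eta}$; hence every prime $p$ with $2^{-J}Y<p\leq Y$ is larger than any prime factor of the smooth numbers $w$ occurring in \eqref{eq:K}, the factorisation $m=pw$ is unique, and the products $pw$ are distinct integers lying in $(P,2P]$. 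Thus $K$ is a sub-sum of $f$, and, on comparing the Diophantine equations counted by the two sides via orthogonality (the solution count cannot increase when the range of the variables is restricted), the displayed inequality follows, just as $T_{2}\leq T_{1}$ was obtained in Corollary~\ref{lem:mean value of f^2h^6}.

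Next I would dissect the range of $h$ dyadically, since Lemma~\ref{lem:mean value of f^2g^4} applies only to a generating function supported on an interval $(Q,2Q]$. Write $h=\sum_{0\leq j\leq J_{0}}h_{j}$ with $J_{0}\ll\log R$ and $h_{j}(\alpha)=\sum_{y\in\mathcal{A}(R),\,2^{j}<y\leq2^{j+1}}e(\alpha y^{3})$; the single value $y=1$ is absorbed trivially, as it contributes $\int_{0}^{1}|f(\alpha)|^{2}\:\mathrm{d}\alpha\ll P\leq PR^{2}$. By H\"older's inequality $|h(\alpha)|^{4}\leq(J_{0}+1)^{3}\sum_{j}|h_{j}(\alpha)|^{4}$, and for each $j$ the set $\mathcal{A}(R)\cap(2^{j},2^{j+1}]$ is contained in $(2^{j},2^{j+1}]\cap\mathbb{Z}$, so comparing Diophantine equations once more gives $\int_{0}^{1}|f(\alpha)^{2}h_{j}(\alpha)^{4}|\:\mathrm{d}\alpha\leq\int_{0}^{1}|f(\alpha)^{2}g(\alpha)^{4}|\:\mathrm{d}\alpha$ with $g$ formed with parameter $Q=2^{j}$. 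Since $1\leq2^{j}\leq R\leq(P/2)^{2/3}<P$, Lemma~\ref{lem:mean value of f^2g^4} bounds the latter by $P^{\varepsilon}(P\cdot4^{j}+P^{-1}2^{9j/2})$, and summing the resulting geometric series (each dominated by its largest term, with $2^{j}\leq R$) yields
\[
\int_{0}^{1}|K(\alpha)^{2}h(\alpha)^{4}|\:\mathrm{d}\alpha\ll(\log R)^{3}P^{\varepsilon}(PR^{2}+P^{-1}R^{9/2}).
\]
Finally, $R\leq(P/2)^{2/3}$ gives $R^{5/2}\leq(P/2)^{5/3}\ll P^{2}$, hence $P^{-1}R^{9/2}=P^{-1}R^{2}\cdot R^{5/2}\ll PR^{2}$, and absorbing the logarithm into $P^{\varepsilon}$ delivers the claimed bound $\ll P^{1+\varepsilon}R^{2}$.

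The step requiring genuine care is the first one: $K$ and $h$ are \emph{thinned} versions of $f$ and $g$, so one may not bound $|K|\leq|f|$ or $|h_{j}|\leq|g|$ pointwise; the comparisons must be made at the level of the Diophantine equations counted by the integrals, where restricting the summation ranges obviously cannot increase the solution count. Everything else is routine --- a dyadic dissection, one application of Lemma~\ref{lem:mean value of f^2g^4}, and the elementary inequality $R^{5/2}\ll P^{2}$ coming from the smoothness restriction $R\leq(P/2)^{2/3}$.
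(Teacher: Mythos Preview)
Your proof is correct and follows essentially the same route as the paper. The paper's argument is terser: it asserts $\int_0^1 |K(\alpha)^2 h(\alpha)^4|\,\mathrm{d}\alpha \leq \int_0^1 |f(\alpha)^2 g(\alpha)^4|\,\mathrm{d}\alpha$ in one line (by the same sub-sum comparison at the level of the underlying Diophantine equations) and then invokes Lemma~\ref{lem:mean value of f^2g^4} directly; your dyadic decomposition of $h$ simply makes explicit the passage from $h$, supported on $[1,R]$, to the dyadic generating function $g$ supported on $(Q,2Q]$, and your final check that $P^{-1}R^{9/2}\ll PR^2$ under $R\leq(P/2)^{2/3}$ spells out what the paper leaves as ``immediate''.
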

\begin{proof}
As in the argument of the proof of Corollary \ref{lem:mean value of f^2h^6},
we have
\[
\int_{0}^{1}|K(\alpha)^{2}h(\alpha)^{4}|\:\mathrm{d}\alpha\leq\int_{0}^{1}|f(\alpha)^{2}g(\alpha)^{4}|\:\mathrm{d}\alpha.\]
The desired conclusion is thus immediate from Lemma \ref{lem:mean value of f^2g^4}.
\end{proof}
\par Next we define the mean value
\[ U_1=\int_{0}^{1}|K(\alpha)|^8\;\mathrm{d}\alpha.\]
By considering the underlying diophantine equation, it follows from Theorem 2 of \cite{vaughan1986} that
\begin{equation}
U_1\ll P^{5}.\label{eq:|K|^8}\end{equation}
\par Introduce the function $f^{*}:[0,1)\rightarrow\mathbb{C}$ given by
\begin{equation}
f^{*}(\alpha)=\begin{cases}
q^{-1}S(q,a)v(\alpha-a/q),\qquad & \text{ when }\alpha\in\mathfrak{M}(q,a;P^{6/5})\subseteq\mathfrak{M}(P^{6/5}),\\
0, & \text{ otherwise}.\end{cases}\label{eq:f*}\end{equation}
Also, when $R\leq X\leq P^{6/5}$, let
\[U(X)=\int_{\mathfrak{M}(2X)\backslash\mathfrak{M}(X)}|f^*(\alpha)|^8\;\mathrm{d}\alpha. \] 
Finally, define
\[U_2=\int_{\mathfrak{M}(R)\backslash\mathfrak{P}}|f^*(\alpha)|^{16/3}\;\mathrm{d}\alpha.\]
Then we have the following estimates.
\begin{lem}
\label{lem:mean of f*^8 and f*^16/3}Whenever $R\leq X\leq P^{6/5},$
we have
\[
U(X)\ll P^{5}X^{\varepsilon-4/3}\qquad\text{and}\qquad U_2\ll P^{7/3}L^{-4/9}.\]
\end{lem}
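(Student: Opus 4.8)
The plan is to bound both quantities directly from the definition of $f^{*}$, using on each arc $\mathfrak{M}(q,a;P^{6/5})$ the identity $|f^{*}(\alpha)|=|q^{-1}S(q,a)|\,|v(\alpha-a/q)|$ together with $v(\beta)\ll P(1+P^{3}|\beta|)^{-1}$ from \eqref{eq:estimate for v(beta)}. The arithmetic is carried by the multiplicative quantity
\[
c_{s}(q)=\sum_{\substack{a=1\\(a,q)=1}}^{q}\bigl|q^{-1}S(q,a)\bigr|^{s},
\]
for which I would combine three facts: the classical bound $|q^{-1}S(q,a)|\ll q^{\varepsilon-1/3}$, so $c_{s}(q)\ll q^{\varepsilon+1-s/3}$; the sharper bound $|q^{-1}S(q,a)|\ll q^{\varepsilon-1/2}$ for squarefree $q$ (indeed $S(q,a)=0$ as soon as a prime $\equiv 2\pmod 3$ divides $q$ exactly once); and the exact evaluation $|q^{-1}S(q,a)|=q^{-1/3}$ when $q$ is a perfect cube. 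Writing $q=q_{1}q_{2}$ with $q_{1}$ cubefree and $q_{2}$ cube-full and coprime, and using that there are $O(T^{1/3})$ cube-full integers up to $T$, these inputs yield, for every $s>4$,
\[
\sum_{q>Q}c_{s}(q)\ll Q^{\varepsilon+(4-s)/3}\qquad\text{and}\qquad\sum_{q\le Q}q^{s-1}c_{s}(q)\ll Q^{\varepsilon+(2s+1)/3},
\]
the dominant contribution in each case coming from the perfect cubes $q=m^{3}$.

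For $U(X)$, since $X\le P^{6/5}$ gives $8X^{2}<P^{3}$, the arcs $\mathfrak{M}(q,a;2X)$ are pairwise disjoint and every point of $\mathfrak{M}(2X)\setminus\mathfrak{M}(X)$ lies in a unique one of them with either $X<q\le 2X$, or with $q\le X$ and $XP^{-3}<|q\alpha-a|\le 2XP^{-3}$. In the first case $|\alpha-a/q|<2P^{-3}$, so $v(\alpha-a/q)\ll P$ and the arc has measure $O(XP^{-3}/q)$, whence this part contributes $\ll P^{8}\sum_{X<q\le 2X}c_{8}(q)\,XP^{-3}q^{-1}\ll P^{5}\sum_{q>X}c_{8}(q)\ll P^{5}X^{\varepsilon-4/3}$. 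In the second case $|\alpha-a/q|>P^{-3}$, so $v(\alpha-a/q)\ll Pq/X$, and integrating over the annulus of measure $O(XP^{-3}/q)$ contributes $\ll P^{5}X^{-7}\sum_{q\le X}q^{7}c_{8}(q)\ll P^{5}X^{\varepsilon-4/3}$ by the second display with $s=8$. Summing the two cases gives the first estimate.

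For $U_{2}$ I would decompose dyadically. The arcs $\mathfrak{M}(q,a;R)$ are disjoint since $R=P^{3\theta}<P^{3/2}$, and one has the trivial inclusion $\mathfrak{M}(R)\setminus\mathfrak{P}\subseteq(\mathfrak{M}(R)\setminus\mathfrak{M}(L))\cup(\mathfrak{M}(L)\setminus\mathfrak{P})$. On $\mathfrak{M}(L)\setminus\mathfrak{P}$ only the denominators $q\le 3$ can occur, and there $|\alpha-a/q|\gg LP^{-3}$, so $|f^{*}(\alpha)|\ll P/L$ and, the region having measure $O(LP^{-3})$, this contributes only $O(P^{7/3}L^{-13/3})$. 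Splitting $\mathfrak{M}(R)\setminus\mathfrak{M}(L)$ into the sets $\mathfrak{M}(2X)\setminus\mathfrak{M}(X)$ over dyadic $X$ with $L\le X\le R$ and repeating the argument of the previous paragraph with the exponent $16/3$ in place of $8$ gives $\int_{\mathfrak{M}(2X)\setminus\mathfrak{M}(X)}|f^{*}|^{16/3}\,\mathrm{d}\alpha\ll P^{7/3}X^{\varepsilon-4/9}$; summing the resulting geometric series over $X\ge L$ yields $U_{2}\ll P^{7/3}L^{-4/9}$.

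The one genuine obstacle is the tail bound $\sum_{q>Q}c_{s}(q)\ll Q^{\varepsilon+(4-s)/3}$. The naive estimate $|S(q,a)|\ll q^{2/3+\varepsilon}$ applied to all $q$ only gives $\sum_{q>Q}c_{s}(q)\ll Q^{\varepsilon+1-s/3}$, which for $s=16/3$ does not even converge; one really must use that the complete cubic exponential sum attains its extremal size $q^{2/3}$ essentially only on the sparse set of cubes (and cube-full moduli), being much smaller — and frequently zero — otherwise, together with the density bound for cube-full integers. The remainder is the standard bookkeeping of major-arc pruning; alternatively, the two estimates can be extracted from the pruning lemmas in the work of Vaughan, Br\"{u}dern and Wooley.
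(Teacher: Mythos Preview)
Your proposal is correct and is precisely the argument the paper invokes: the paper's entire proof is the sentence ``These two inequalities are established by the argument of Lemma~5.1 of \cite{vaughan1989},'' and what you have written is a faithful sketch of that pruning argument (splitting $\mathfrak{M}(2X)\setminus\mathfrak{M}(X)$ according to whether $q>X$ or $q\le X$, bounding $v$ via \eqref{eq:estimate for v(beta)}, and controlling the arithmetic by the multiplicative sums $c_s(q)$ with the cube-full/cubefree decomposition supplying the exponent $(4-s)/3$).

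One small point to tighten: as stated, your tail bound $\sum_{q>Q}c_s(q)\ll Q^{\varepsilon+(4-s)/3}$ carries an $\varepsilon$, which for $U(X)$ is harmless (the target estimate already has $X^{\varepsilon}$) but for $U_2$ would yield $P^{7/3}L^{\varepsilon-4/9}$; since $L=(\log P)^{10}$, the factor $L^{\varepsilon}$ is not bounded. In Vaughan's argument the $\varepsilon$ is avoided by exploiting the full multiplicative structure of $c_s$ (the Euler product converges absolutely for $s>4$, and the tail genuinely is $O(Q^{(4-s)/3})$), exactly along the lines you indicate with the cubefree/cube-full splitting; just make sure to state the tail bound without the $\varepsilon$ when you use it for $U_2$. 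Also, in your Case~2 for $U(X)$ the justification should read $|\alpha-a/q|>X/(qP^{3})$ rather than $|\alpha-a/q|>P^{-3}$, though your conclusion $v\ll Pq/X$ is the correct one.
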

\begin{proof}
These two inequalities are established by the argument of Lemma 5.1 of \cite{vaughan1989}.\end{proof}
When $\beta\in\mathbb{R}$ and $Z$ is a positive number, write
\begin{equation}
w(\beta;Z)=\int_{0}^{Z}e(\beta\gamma^3)\;\mathrm{d}\gamma.\label{eq:w}\end{equation}
Also, when $\alpha\in\mathbb{R}$ and $1<Z\leq R$, let
\begin{equation}
F^*(\alpha,Z)=\begin{cases}
q^{-1}S(q,a)w(\alpha-a/q;2P),\qquad & \text{when }\alpha\in\mathfrak{M}(q,a;Z),\\
0,\qquad & \text{otherwise.}\end{cases}\label{eq:F*}\end{equation}
When $\alpha\in\mathbb{R}$ and $\mathcal{B}\subseteq[1,R]$, let
\begin{equation}
j(\alpha;\mathcal{B})=\sum_{x\in\mathcal{B}}e(\alpha x^3).\label{eq:j(alpha)}\end{equation}
Finally, when $t>0$, $1<Z\leq R$ and $\mathcal{B}\subseteq[1,R]$, put
\begin{equation} U_3(t;Z;\mathcal{B})=\int_{\mathfrak{M}(Z)}|f^*(\alpha)^tj(\alpha;\mathcal{B})^6|\;\mathrm{d}\alpha\label{eq:U_3}\end{equation}
and
\begin{equation} U_4(t;Z;\mathcal{B})=\int_{\mathfrak{M}(Z)}|F^*(\alpha;Z)^{t}j(\alpha;\mathcal{B})^6|\;\mathrm{d}\alpha.\label{eq:U_4}\end{equation}
We give upper bounds for these two integrals in the following lemma.
\begin{lem}
\label{lem:mean of f*^vh^6}Let $1<Z\leq R$ and $\mathcal{B}\subseteq[1,R]$. Then when $t>2$, we have
\[ U_3(t;Z;\mathcal{B})\ll P^{t-3}R^6.\]
On the other hand, when $t>3$, we have the same upper bound for $U_4(t;Z;\mathcal{B})$.
\end{lem}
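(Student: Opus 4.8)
The plan is to peel off the arcs $\mathfrak{M}(q,a;Z)$ comprising $\mathfrak{M}(Z)$, to replace $j(\alpha;\mathcal{B})$ on each such arc by the complete exponential sum $S(\mathcal{B};q,a):=\sum_{x\in\mathcal{B}}e(ax^{3}/q)$, and thereby to reduce both bounds to the convergence of a Dirichlet series built from the classical cubic Gauss sums $S(q,a)$. Write $\Phi(q)=\max_{(a,q)=1}q^{-1}\abs{S(q,a)}$; since $S(q,a)$ is multiplicative in $q$, so is $\Phi$. The two inputs I would use are: (i) $\Phi(q)\ll q^{-1/2+\varepsilon}$ when $q$ is squarefree, while $\Phi(q)\ll q^{-1/3}$ in general, so that the local factor of $\sum_{q\geq1}\Phi(q)^{t}q^{\varepsilon}$ at $p$ is $1+O(p^{-t/2+\varepsilon})$ and hence $\sum_{q\geq1}\Phi(q)^{t}q^{\varepsilon}\ll1$ whenever $t>2$; and (ii) for any $\mathcal{B}\subseteq[1,R]$ and any $q\leq R$, orthogonality gives
\[
\sum_{a=1}^{q}\abs{S(\mathcal{B};q,a)}^{6}=q\cdot\#\{(\mathbf{u},\mathbf{v})\in\mathcal{B}^{3}\times\mathcal{B}^{3}:u_{1}^{3}+u_{2}^{3}+u_{3}^{3}\equiv v_{1}^{3}+v_{2}^{3}+v_{3}^{3}\ (\mathrm{mod}\ q)\}\ll q^{\varepsilon}R^{6},
\]
the last estimate following on bounding the number of $x\in[1,R]$ with $x^{3}$ in a prescribed residue class modulo $q$ by $O(q^{\varepsilon}R/q)$.

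To estimate $U_{3}$, I would dissect $\mathfrak{M}(Z)$ into the arcs $\mathfrak{M}(q,a;Z)$ and set $\beta=\alpha-a/q$, so that $\abs{\beta}\leq ZP^{-3}/q\leq RP^{-3}$ and, since $R\leq P^{2/3}$ for the admissible $\theta$, $\abs{\beta x^{3}}\ll R^{4}P^{-3}\ll P^{-1/3}$ for every $x\in\mathcal{B}$. Hence
\[
j(\alpha;\mathcal{B})=\sum_{x\in\mathcal{B}}e(ax^{3}/q)\bigl(1+O(\abs{\beta}x^{3})\bigr)=S(\mathcal{B};q,a)+O(\abs{\beta}R^{4}),
\]
so that $\abs{j(\alpha;\mathcal{B})}^{6}\ll\abs{S(\mathcal{B};q,a)}^{6}+(\abs{\beta}R^{4})^{6}$, the second term contributing negligibly for the $\theta$ under consideration. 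Since $f^{*}(\alpha)=q^{-1}S(q,a)v(\alpha-a/q)$ on $\mathfrak{M}(q,a;Z)$ and $\int_{-\infty}^{\infty}\abs{v(\beta)}^{t}\,\mathrm{d}\beta\ll P^{t-3}$ for $t>1$ (immediate from \eqref{eq:estimate for v(beta)}), feeding this into \eqref{eq:U_3} and using $\abs{q^{-1}S(q,a)}\leq\Phi(q)$ and input (ii) yields
\[
U_{3}(t;Z;\mathcal{B})\ll P^{t-3}\sum_{q\leq Z}\Phi(q)^{t}\sum_{a=1}^{q}\abs{S(\mathcal{B};q,a)}^{6}\ll P^{t-3}R^{6}\sum_{q\leq Z}\Phi(q)^{t}q^{\varepsilon}\ll P^{t-3}R^{6}
\]
for $t>2$, as claimed.

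For $U_{4}$ the argument is identical, with $v(\alpha-a/q)$ replaced throughout by $w(\alpha-a/q;2P)$ and $f^{*}$ by $F^{*}(\cdot;Z)$. The only difference is that $\int_{0}^{2P}e(\beta\gamma^{3})\,\mathrm{d}\gamma\ll P(1+P^{3}\abs{\beta})^{-1/3}$, so the corresponding bound $\int_{-\infty}^{\infty}\abs{w(\beta;2P)}^{t}\,\mathrm{d}\beta\ll P^{t-3}$ now requires $t>3$; the summation over $q$ and the treatment of $j(\alpha;\mathcal{B})$ go through verbatim, giving $U_{4}(t;Z;\mathcal{B})\ll P^{t-3}R^{6}$ for $t>3$.

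The step I expect to require the most care is the organisation of the sum over $q$. Estimating $\abs{j(\alpha;\mathcal{B})}^{6}$ trivially by $R^{6}$ costs a factor $\phi(q)$ at each modulus and only delivers the range $t>4$; it is essential to average $\abs{S(\mathcal{B};q,a)}^{6}$ over $a$, keeping $\Phi(q)^{t}$ intact, so that the decisive point becomes the convergence of $\sum_{q}\Phi(q)^{t}q^{\varepsilon}$, which hinges on the cubic Gauss sum being of size $q^{1/2+\varepsilon}$ rather than $q^{2/3}$ for the generic (squarefree) modulus. It is precisely the replacement of $j(\alpha;\mathcal{B})$ by the complete sum $S(\mathcal{B};q,a)$ that makes this $a$-average available, and that replacement is where the smallness of $\theta$ enters.
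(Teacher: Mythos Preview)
Your overall architecture matches the paper's: dissect $\mathfrak{M}(Z)$ into its constituent arcs, pull out $(q^{-1}S(q,a))^{t}$, integrate $|v(\beta)|^{t}$ (respectively $|w(\beta;2P)|^{t}$) over $\beta$, exploit orthogonality in the $a$-sum to reduce the sixth moment of $j$ to a congruence count, and finish with a multiplicative bound showing $\sum_{q}\Phi(q)^{t}$ converges for $t>2$. Your input (ii) is essentially the estimate the paper derives, and your convergence argument for the $q$-series is equivalent to the paper's Euler-product computation with the multiplicative weight $w(q)$.

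The gap is the approximation step. You replace $j(\alpha;\mathcal{B})$ by $S(\mathcal{B};q,a)+O(|\beta|R^{4})$ and justify discarding the error by assuming $R\leq P^{2/3}$. But the lemma as stated carries no restriction on $\theta$, and it is applied in Section~8 to $U_{4}(7/2,P^{3/4};[1,R]\cap\mathbb{Z})$ with $R\asymp P^{3\theta}$ for $\theta\in(1/4,1/3)$; there $R$ may be nearly as large as $P$, and the error contribution $(|\beta|R^{4})^{6}$ swamps the main term by a large power of $P$. The ``smallness of $\theta$'' you flag as the place where the argument bites is therefore an artefact of your detour, not a genuine feature of the problem.

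The fix is simply to omit the approximation. Expand $\bigl|\sum_{x\in\mathcal{B}}e((a/q+\beta)x^{3})\bigr|^{6}$ as a sum over $\mathbf{y}\in\mathcal{B}^{6}$ and sum over $a=1,\dots,q$ (dropping coprimality) \emph{before} estimating; orthogonality gives
\[
\sum_{a=1}^{q}\Bigl|\sum_{x\in\mathcal{B}}e\bigl((a/q+\beta)x^{3}\bigr)\Bigr|^{6}
= q\sum_{\substack{\mathbf{y}\in\mathcal{B}^{6}\\ q\mid\Psi(\mathbf{y})}}e(\beta\Psi(\mathbf{y})),
\]
whose modulus is at most $q$ times your congruence count, hence $\ll R^{6}$ \emph{uniformly in $\beta$} (using only $q\leq Z\leq R$). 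This is precisely your input (ii), applied to $j(a/q+\beta;\mathcal{B})$ itself rather than to its $\beta=0$ specialisation $S(\mathcal{B};q,a)$; the phase $e(\beta\Psi(\mathbf{y}))$ is harmless once bounded trivially. With this single change the rest of your argument goes through verbatim, and no hypothesis on the size of $R$ relative to $P$ is needed.
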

\begin{proof}
This argument is largely akin to that given in the proof of Lemma
5.4 of \cite{VaughanWooley2000}. Define the arithmetic function $w$
multiplicatively by
\begin{equation}
w(\varpi^{3u+v})=\begin{cases}
3\varpi^{-u-1/2},\qquad & \text{if }u\geq0\text{ and }v=1,\\
\varpi^{-u-1}, & \text{if }u\geq0\text{ and }v\in\{2,3\}.\end{cases}\label{eq:arithmetic function w}\end{equation}
Then from Lemmata 4.3, 4.4 and Theorem 4.2 of \cite{vaughan1997},
we deduce that when $(a,q)=1$, we have
\begin{equation}
q^{-1}S(q,a)\ll w(q).\label{eq:bounding q^-1S(q,a) by w(q)}\end{equation}
\par When $\alpha\in\mathbb{R}$, $1<Z\leq R$, $\theta_1>2$ and $\theta_2>1$, let
\begin{equation}
\Upsilon(\alpha;\theta_1,\theta_2;Z)=\begin{cases}
w(q)^{\theta_1}(1+P^{3}|\alpha-a/q|)^{-\theta_2},\qquad & \text{when }\alpha\in\mathfrak{M}(q,a;Z),\\
0, & \text{otherwise.}\end{cases}\label{eq:upsilon}\end{equation}
With $\theta_1,\theta_2$ and $Z$ as above and $\mathcal{B}\subseteq[1,R]$, write
\begin{equation}
T(\theta_1,\theta_2;Z,\mathcal{B})=\int_{\mathfrak{M}(Z)}|\Upsilon(\alpha;\theta_1,\theta_2;Z)j(\alpha;\mathcal{B})^{6}|\:\mathrm{d}\alpha.\label{eq:T in terms of upsilon}\end{equation}
Recall the definition \eqref{eq:f*} of $f^{*}$. The estimates \eqref{eq:bounding q^-1S(q,a) by w(q)}
and \eqref{eq:estimate for v(beta)} imply that when $t>0$, we have
\begin{equation}
\int_{\mathfrak{M}(Z)}|f^{*}(\alpha)^tj(\alpha;\mathcal{B})^{6}|\:\mathrm{d}\alpha\ll P^tT(t,t;Z;\mathcal{B}).\label{eq:integral of f*h in terms of upsilon}\end{equation}
On the other hand, Theorem 7.3 of \cite{vaughan1997} yields
\begin{equation}
w(\beta;U)\ll U(1+U^{3}|\beta|)^{-1/3}\label{eq:estimate for w_0}\end{equation}
for all positive numbers $U$. This, together with \eqref{eq:F*}, \eqref{eq:bounding q^-1S(q,a) by w(q)}, \eqref{eq:upsilon} and \eqref{eq:T in terms of upsilon}, give rise to
\begin{equation}
\int_{\mathfrak{M}(Z)}|F^*(\alpha;Z)^tj(\alpha;\mathcal{B})^6|\;\mathrm{d}\alpha\ll P^tT(t,t/3;Z;\mathcal{B}).\label{eq:integral of F*t in terms of upsilon}\end{equation}
We therefore require an upper bound for $T$. Here we prove that when $\theta_1>2$, $\theta_2>1$, $1<Z\leq R$ and $\mathcal{B}\subseteq[1,R]$, we have
\begin{equation}
T(\theta_1,\theta_2;Z;\mathcal{B})\ll P^{-3}R^6\label{eq:ultimate bound for T}\end{equation}
\par Substituting \eqref{eq:upsilon} into \eqref{eq:T in terms of upsilon}
yields
\[
T(\theta_1,\theta_2;Z;\mathcal{B})\leq\sum_{1\leq q\leq Z}w(q)^{\theta_1}\sum_{a=1}^{q}\int_{\mathfrak{M}(q,a;Z)}(1+P^{3}|\alpha-a/q|)^{-\theta_2}\Big|\sum_{y\in\mathcal{B}}e(y^{3}\alpha)\Big|^{6}\:\mathrm{d}\alpha,\]
where we have removed the coprimality condition $(a,q)=1$ in the
$a$-summation. Recalling the definition \eqref{eq:M(q,a;X)} of the
major arcs $\mathfrak{M}(q,a;Z)$ and making the change of variables
$\alpha=a/q+\beta$ in the integral, we obtain
\begin{equation}
T(\theta_1,\theta_2;Z;\mathcal{B})\leq\sum_{1\leq q\leq Z}w(q)^{\theta_1}\int_{-\infty}^{\infty}(1+P^3|\beta|)^{-\theta_2}\sum_{a=1}^{q}\Big|\sum_{y\in\mathcal{B}}e\Big(y^{3}(\beta+a/q)\Big)\Big|^{6}\;\mathrm{d}\beta.\label{eq:first upper bound for T}\end{equation}
Here we have extended the range of integration from $|\beta|\leq q^{-1}ZP^{-3}$
to the whole real line. This is valid as the completed integral evidently converges absolutely.
\par When $\boldsymbol{y}=(y_{1},...,y_{6})$
with integral coordinates $y_{1},...,y_{6}$, write
\begin{equation}
\Psi(\boldsymbol{y})=y_{1}^{3}-y_{2}^{3}+y_{3}^{3}-y_{4}^{3}+y_{5}^{3}-y_{6}^{3}.\label{eq:phi(y)}\end{equation}
Expanding the innermost sum in \eqref{eq:first upper bound for T}
and then swapping the $a$- and $\boldsymbol{y}$-sums, we see that 
\[
\sum_{a=1}^{q}\Big|\sum_{y\in\mathcal{B}}e\Big(y^{3}(\beta+a/q)\Big)\Big|^{6}=\sum_{y_{1},...,y_{6}\in\mathcal{B}}e(\beta\Psi(\boldsymbol{y}))\sum_{a=1}^{q}e(a\Psi(\boldsymbol{y})/q).\]
The $a$-sum here is zero unless $q|\Psi(\boldsymbol{y})$, in which
case it equals $q$. Thus
\begin{equation}
\sum_{a=1}^{q}\Big|\sum_{y\in\mathcal{B}}e\Big(y^{3}(\beta+a/q)\Big)\Big|^{6}=q\sum_{\substack{y_{1},...,y_{6}\in\mathcal{B}\\
q|\Psi(\boldsymbol{y})}
}e(\beta\Psi(\boldsymbol{y})).\label{eq:sixth moment sum in terms of psi}\end{equation}
Let $\rho(q)$ be the number of solutions to the congruence $\Psi(\boldsymbol{y})\equiv0\;(\text{mod }q)$,
under the constraint that each coordinate of $\boldsymbol{y}$ is
a positive integer not exceeding $q$. A trivial estimate then gives
\begin{equation}
\sum_{\stackrel{y_{1},...,y_{6}\in\mathcal{B}}{q|\Psi(\boldsymbol{y})}}e(\beta\Psi(\boldsymbol{y}))\leq\sum_{\stackrel{1\leq y_{1},...,y_{6}\leq R}{q|\Psi(\boldsymbol{y})}}1\ll(R/q+1)^{6}\rho(q).\label{eq:bounding y-sum in terms of rho(q)}\end{equation}
By orthogonality, it follows from the definitions \eqref{eq:S(q,a)}
and \eqref{eq:phi(y)}, of $S(q,a)$ and $\Psi(\boldsymbol{y})$ respectively,
that
\[
q\rho(q)=\sum_{a=1}^{q}|S(q,a)|^{6}=\sum_{a=1}^{q}(q,a)^{6}\Big|S\Big(\frac{q}{(q,a)},\frac{a}{(q,a)}\Big)\Big|^{6}.\]
Hence Theorem 4.2 of \cite{vaughan1997} yields
\begin{equation}
q\rho(q)\ll\sum_{a=1}^{q}(q,a)^{6}((q/(q,a))^{2/3})^{6}=q^{4}\sum_{a=1}^{q}(q,a)^{2}\ll q^{6}.\label{eq:bounding rho(q)}\end{equation}
\par Putting \eqref{eq:sixth moment sum in terms of psi},
\eqref{eq:bounding y-sum in terms of rho(q)} and \eqref{eq:bounding rho(q)}
together, we see that the double sum within the integral in \eqref{eq:first upper bound for T}
has the asymptotic upper bound
\[
q(R/q+1)^{6}\rho(q)\ll(R/q+1)^{6}q^{6}=(R+q)^{6}\ll R^{6}.\]
Inserting this estimate into \eqref{eq:first upper bound for T},
we obtain
\begin{equation}
T(\theta_1,\theta_2;Z,\mathcal{B})\ll P^{-3}R^{6}\sum_{1\leq q\leq Z}w(q)^{\theta_1}.\label{eq:second upper bound for T}\end{equation}
It thus remains to show that the sum here is uniformly bounded over $Z$.
\par Now that $w$ is a multiplicative function, it suffices to evaluate the values
of $w(\varpi^{3u+v})^{\theta_1}$ for all primes $\varpi$ in the analysis
of the $q$-sum in \eqref{eq:second upper bound for T}. From the
definition \eqref{eq:arithmetic function w} of $w$, we readily confirm
that for any prime $\varpi$, we have the bounds
\[
w(\varpi^{3u+v})^{\theta_1}\ll_{\theta_1}\begin{cases}
\varpi^{-\theta_1u-\theta_1/2},\qquad & \text{if }u\geq0\mbox{\text{ and }}v=1,\\
\varpi^{-\theta_1u-\theta_1}, & \text{if }u\geq0\text{ and }v\in\{2,3\}.\end{cases}\]
This reveals that for each $\varpi$, we have
\[
\sum_{h=1}^{\infty}w(\varpi^{h})^{\theta_1}\ll_{\theta_1}\varpi^{-\theta_1/2}.\]
This ensures the existence of a positive number $A=A(\theta_1)$ for which 
the $q$-sum in \eqref{eq:second upper bound for T} is bounded above by the
Euler product
\[
\prod_{\varpi\leq Z}\Big(1+\sum_{h=1}^{\infty}w(\varpi^{h})^{\theta_1}\Big)\ll\prod_{\varpi\leq Z}(1+A\varpi^{-\theta_1/2})\ll\prod_{\varpi}(1+\varpi^{-\theta_1/2})^A\ll_{\theta_1}1.\]
The given condition $\theta_1>2$ validates the last inequality above. The desired inequality \eqref{eq:ultimate bound for T} follows immediately.
\par The lemma thus follows by putting together the inequalities \eqref{eq:integral of f*h in terms of upsilon}, \eqref{eq:integral of F*t in terms of upsilon} and \eqref{eq:ultimate bound for T}, and recalling the definitions \eqref{eq:U_3} and \eqref{eq:U_4} of $U_3$ and $U_4$ respectively.
\end{proof}
\section{Minor arc estimate}
On recalling \eqref{eq:parameters for existence}, it follows from
Proposition \ref{pro:major arc estimate for existence} that for almost
all integers $n$ with $N<n\leq2N$, we have
\[
\rho_{\theta}(n;\mathfrak{P})\gg P^{-1}R^{2}.\]
We therefore seek to show that the minor arc contribution $\rho_{\theta}(n;\mathfrak{p})$
is $o(P^{-1}R^{2})$ for almost all such $n$. For any measurable
subset $\mathfrak{B}$ of $[0,1)$, write
\begin{equation}
S(\mathfrak{B})=\sum_{N<n\leq2N}|\rho_{\theta}(n;\mathfrak{B})|^{2}.\label{eq:S(n,B)}\end{equation}
The desired bound for $\rho_{\theta}(n;\mathfrak{p})$ follows if
we can establish the relation
\begin{equation}
S(\mathfrak{p})=o(PR^{4}).\label{eq:aim of minor arc estimate for existence}\end{equation}
First introduce the arcs
\begin{align}
\mathfrak{\mathfrak{m}=\mathfrak{m}}(PY^{3})&,\qquad\mathfrak{D}=\mathfrak{M}(PY^{3})\backslash\mathfrak{M}(P^{6/5}),\nonumber\\ \mathfrak{U}=\mathfrak{M}(P^{6/5})\backslash\mathfrak{M}(R)&,\qquad\mathfrak{A=}\mathfrak{M}(R)\backslash\mathfrak{P}.\label{eq:prunings for existence}\end{align}
Then evidently $\mathfrak{p}=\mathfrak{m}\cup\mathfrak{D}\cup\mathfrak{U}\cup\mathfrak{A}$,
so 
\begin{equation}
S(\mathfrak{p})=S(\mathfrak{m})+S(\mathfrak{D})+S(\mathfrak{U})+S(\mathfrak{A}).\label{eq:splitting for existence}\end{equation}
For any positive number $Y$, define
\[
I(Y)=\int_{\mathfrak{m}}|f(\alpha)^{2}K(\alpha)^{6}|\:\mathrm{d}\alpha.\]
It is then a consequence of Corollary 3.2 of \cite{BrudernWooley2009}
that when $Y$ is chosen as in \eqref{eq:parameters for existence},
we have the bound
\begin{equation}
I(Y)\ll P^{19/4-\tau/2}Y^{-3/4}.\label{eq:I(Y)}\end{equation}
\begin{prop}
\label{pro:S(m)}As long as $\theta\leq2/9$, we have
\[
S(\mathfrak{m})\ll P^{175/79-\tau/6}R^{13/6-2\tau/3}.\]
\end{prop}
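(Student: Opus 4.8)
The plan is to reduce $S(\mathfrak{m})$ to a single minor-arc mean value by Bessel's inequality, and then to dissect that mean value by Hölder's inequality into one factor controlled by $I(Y)$ and one controlled by the quantity $T_1$ of Corollary~\ref{lem:mean value of f^2h^6}.

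First I would observe that, for each integer $n$, the quantity $\rho_\theta(n;\mathfrak{m})$ defined in \eqref{eq:rho(n,B) for existence} is precisely the $n$-th Fourier coefficient of the bounded function $\alpha\mapsto\mathbf{1}_{\mathfrak{m}}(\alpha)f(\alpha)K(\alpha)h(\alpha)^2$ on $[0,1)$. Since $S(\mathfrak{m})$ is a sum of $|\widehat{G}(n)|^2$ over a subset of $\mathbb{Z}$, Bessel's inequality yields
\[
S(\mathfrak{m})=\sum_{N<n\le 2N}|\rho_\theta(n;\mathfrak{m})|^2\le\int_{\mathfrak{m}}|f(\alpha)|^2|K(\alpha)|^2|h(\alpha)|^4\,\mathrm{d}\alpha.
\]
(One could instead expand the square, evaluate the geometric sum in $n$, and estimate, but Bessel is the cleanest route.)

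Next I would dissect the right-hand side by Hölder's inequality with exponents $3$ and $3/2$, using the identity $f^2K^2h^4=(f^2K^6)^{1/3}(f^2h^6)^{2/3}$, to obtain
\[
\int_{\mathfrak{m}}|f^2K^2h^4|\,\mathrm{d}\alpha\le\Big(\int_{\mathfrak{m}}|f^2K^6|\,\mathrm{d}\alpha\Big)^{1/3}\Big(\int_{\mathfrak{m}}|f^2h^6|\,\mathrm{d}\alpha\Big)^{2/3}.
\]
The first factor is $I(Y)^{1/3}$; for the second, one extends the range of integration from $\mathfrak{m}$ to $[0,1)$, so that it is at most $T_1^{2/3}$. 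Here the hypothesis $\theta\le 2/9$ is exactly what guarantees $R=P^{3\theta}\le P^{2/3}$, so that the estimate $T_1\ll PR^{13/4-\tau}$ of Corollary~\ref{lem:mean value of f^2h^6} applies; combining this with the bound $I(Y)\ll P^{19/4-\tau/2}Y^{-3/4}$ from \eqref{eq:I(Y)} gives
\[
S(\mathfrak{m})\ll\big(P^{19/4-\tau/2}Y^{-3/4}\big)^{1/3}\big(PR^{13/4-\tau}\big)^{2/3}=P^{9/4-\tau/6}Y^{-1/4}R^{13/6-2\tau/3}.
\]
Recalling $Y=P^{11/79}$ from \eqref{eq:parameters for existence}, we have $P^{9/4}Y^{-1/4}=P^{9/4-11/316}=P^{175/79}$, whence $S(\mathfrak{m})\ll P^{175/79-\tau/6}R^{13/6-2\tau/3}$, as claimed.

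The only genuinely delicate point is the choice of Hölder dissection. Among the mean value estimates available, only $I(Y)$ carries a factor $f^2$ over $\mathfrak{m}$ together with a high power of $K$, so it is forced to appear; balancing the residual powers of $K$ against $h$ then pins down the complementary weight as $f^2h^6$, and it is a fortunate feature of the parameter choice $Y=P^{11/79}$ that the resulting exponent of $P$ collapses to $175/79$. Once this bookkeeping is arranged there is no serious obstacle: all the analytic input — the minor-arc bound for $I(Y)$ and the smooth-number mean value $T_1$ — has already been recorded, so the argument amounts to a short combination of Bessel's and Hölder's inequalities.
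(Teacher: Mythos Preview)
Your proof is correct and follows essentially the same approach as the paper: Bessel's inequality to reduce $S(\mathfrak{m})$ to $\int_{\mathfrak{m}}|f^2K^2h^4|\,\mathrm{d}\alpha$, then the H\"older split $(f^2K^6)^{1/3}(f^2h^6)^{2/3}$ handled by $I(Y)$ and $T_1$ respectively, with the hypothesis $\theta\le 2/9$ invoked to apply Corollary~\ref{lem:mean value of f^2h^6}. The arithmetic $P^{9/4}Y^{-1/4}=P^{175/79}$ is also exactly the paper's computation.
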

\begin{proof}
Applications of Bessel's inequality followed by H\"{o}lder's inequality
reveal that
\begin{equation}
S(\mathfrak{m})\leq\int_{\mathfrak{m}}|f(\alpha)^{2}K(\alpha)^{2}h(\alpha)^{4}|\:\mathrm{d}\alpha\leq I(Y)^{1/3}\Big(\int_{0}^{1}|f(\alpha)^{2}h(\alpha)^{6}|\:\mathrm{d}\alpha\Big)^{2/3}.\end{equation}
The restriction $\theta\leq2/9$ enables the application of Corollary
\ref{lem:mean value of f^2h^6} here. Together with \eqref{eq:I(Y)} and \eqref{eq:parameters for existence}, this gives
\[
S(\mathfrak{m})\ll(P^{19/4-\tau/2}Y^{-3/4})^{1/3}(PR^{13/4-\tau})^{2/3}
,\]
 and the desired conclusion follows from a modest calculation.\end{proof}
Note that when $\theta\geq192/869$, the bound provided by this proposition is indeed $o(PR^{4})$.
\par Next we evaluate $S(\mathfrak{D})$. For any non-negative integer
$l$, if the dyadic interval $(2^{l}P^{6/5},2^{l+1}P^{6/5}]$ lies
within the interval $(P^{6/5},PY^{3}]$, then $l$ satisfies the inequality
$0\leq l\leq c\log P$, where $c=86/(395\log2)$. By introducing another
dissection in the shape
\begin{equation}
\mathfrak{k}(X)=\mathfrak{M}(2X)\backslash\mathfrak{M}(X),\label{eq:k(X)}\end{equation}
we can thus split $\mathfrak{D}$ into the disjoint union
\begin{equation}
\mathfrak{D}=\bigcup_{0\leq l\leq c\log P}\mathfrak{k}(2^{l}P^{6/5}).\label{eq:splitting D}\end{equation}
Whence it suffices to consider $S(\mathfrak{k}(X))$ when $X\in(P^{6/5},PY^{3}]$.
With $\lambda=3/34-\tau/4$ and $X$ as such, we record for future
reference the bound
\begin{equation}
\int_{\mathfrak{k}(X)}|f(\alpha)^{2}K(\alpha)^{5}|\:\mathrm{d}\alpha\ll P^{4+\lambda+\varepsilon}Y^{-1-\lambda}(PY^{3}X^{-1})^{1/2},\label{eq:estimate for f^2K^5}\end{equation}
which is provided by equation (5.6) of \cite{BrudernWooley2009}.
This inequality yields the following bound for $S(\mathfrak{k}(X))$.
\begin{lem}
\label{lem:estimate for S(k(X))}Whenever $P^{6/5}<X\leq PY^{3}$ and $\theta\leq2/9$,
we have
\[
S(\mathfrak{k}(X))\ll P^{13/6+\lambda/3+\varepsilon}Y^{1/6-\lambda/3}R^{13/6-2\tau/3}.\]
\end{lem}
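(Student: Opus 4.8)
The plan is to pass from the counting function $S(\mathfrak{k}(X))$ to a minor-arc integral by Bessel's inequality, and then to dissect that integral by H\"older's inequality so as to feed in the localized estimate \eqref{eq:estimate for f^2K^5} together with the global sixth moment supplied by Corollary \ref{lem:mean value of f^2h^6}. Since $\rho_{\theta}(n;\mathfrak{k}(X))$ is, for $N<n\leq2N$, the $n$-th Fourier coefficient of the restriction of $\alpha\mapsto f(\alpha)K(\alpha)h(\alpha)^{2}$ to $\mathfrak{k}(X)$, Bessel's inequality gives
\[
S(\mathfrak{k}(X)) = \sum_{N<n\leq2N}|\rho_{\theta}(n;\mathfrak{k}(X))|^{2} \leq \int_{\mathfrak{k}(X)}|f(\alpha)^{2}K(\alpha)^{2}h(\alpha)^{4}|\:\mathrm{d}\alpha ,
\]
so the whole task reduces to bounding this last integral.

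The exponent triple $(2,2,4)$ of $(f,K,h)$ is not a convex combination of the exponent triples attached to the mean values available to us, and the remedy is to shed one third of a power of $K$ by a pointwise bound. First, then, I would establish that $|K(\alpha)|\ll P^{1+\varepsilon}X^{-1/3}$ for every $\alpha\in\mathfrak{k}(X)$. Here $\mathfrak{k}(X)=\mathfrak{M}(2X)\backslash\mathfrak{M}(X)$ is contained in $\mathfrak{m}(X)$, so any $\alpha\in\mathfrak{k}(X)$ lies on an arc $\mathfrak{M}(q,a;2X)$ with $q\bigl(1+P^{3}|\alpha-a/q|\bigr)\asymp X$; writing $K(\alpha)=\sum_{2^{-J}Y<p\leq Y}\sum_{w}e(\alpha p^{3}w^{3})$ and analysing each inner smooth cube sum on the corresponding arc for the range $P/p$ — which, because $X\leq PY^{3}$, is a genuinely major arc for that range — the bounds $S(q,a)\ll q^{2/3+\varepsilon}$ and \eqref{eq:estimate for v(beta)} give the main term the size $P^{1+\varepsilon}X^{-1/3}$, while the smoothness of the $w$'s keeps the error in check; the $O(\log P)$ primes $p$ dividing $q$ have to be isolated and treated separately, but contribute an admissible amount. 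Since $X>P^{6/5}$, this in particular yields $|K(\alpha)|\ll X^{1/2+\varepsilon}$ on $\mathfrak{k}(X)$.

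With this in hand, I would write $|f^{2}K^{2}h^{4}|=|K|^{1/3}|f^{2}K^{5/3}h^{4}|$ pointwise, pull the factor $\bigl(\sup_{\mathfrak{k}(X)}|K|\bigr)^{1/3}$ out of the integral, and apply H\"older's inequality with exponents $3$ and $\tfrac{3}{2}$:
\[
\int_{\mathfrak{k}(X)}|f^{2}K^{5/3}h^{4}|\:\mathrm{d}\alpha \leq \Bigl(\int_{\mathfrak{k}(X)}|f^{2}K^{5}|\:\mathrm{d}\alpha\Bigr)^{1/3}\Bigl(\int_{0}^{1}|f^{2}h^{6}|\:\mathrm{d}\alpha\Bigr)^{2/3}.
\]
The hypothesis $\theta\leq2/9$ forces $R\leq P^{2/3}$, so Corollary \ref{lem:mean value of f^2h^6} bounds the second factor by $(PR^{13/4-\tau})^{2/3}=P^{2/3}R^{13/6-2\tau/3}$, and \eqref{eq:estimate for f^2K^5} bounds the first by $(P^{9/2+\lambda+\varepsilon}Y^{1/2-\lambda}X^{-1/2})^{1/3}=P^{3/2+\lambda/3+\varepsilon}Y^{1/6-\lambda/3}X^{-1/6}$. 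Multiplying these together with $\bigl(\sup_{\mathfrak{k}(X)}|K|\bigr)^{1/3}\ll X^{1/6+\varepsilon}$, the powers of $X$ cancel exactly and the powers of $P$, $Y$ and $R$ assemble into $P^{13/6+\lambda/3+\varepsilon}Y^{1/6-\lambda/3}R^{13/6-2\tau/3}$, which is the asserted bound. (One could equally retain $|K(\alpha)|\ll P^{1+\varepsilon}X^{-1/3}$; the residual factor $X^{-5/18}$ is then absorbed via $X>P^{6/5}$.)

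The Bessel step and the bookkeeping of exponents are routine; the one real difficulty is the pointwise estimate for $K$ on $\mathfrak{k}(X)$. The exponent $1/3$ there is precisely the value making $P^{1+\varepsilon}X^{-1/3}\ll X^{1/2+\varepsilon}$ valid throughout the range $X>P^{6/5}$, so nothing weaker in the $X$-aspect would close the argument; securing it requires working with the smooth cube sums that constitute $K$ — Weyl's inequality alone would deliver only $X^{-1/4}$ — and controlling the primes that divide the Farey denominator of $\alpha$.
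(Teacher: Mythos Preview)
Your overall architecture --- Bessel's inequality, then H\"older's inequality to feed in the localized bound \eqref{eq:estimate for f^2K^5} and the sixth moment from Corollary~\ref{lem:mean value of f^2h^6} --- matches the paper's, and your exponent bookkeeping is correct. The substantive difference is which generating function you extract in sup norm. You split off $|K|^{1/3}$ and then apply H\"older with the pair $(3,3/2)$ to reach $(\int|f^{2}K^{5}|)^{1/3}(\int|f^{2}h^{6}|)^{2/3}$; the paper instead splits off $|f|^{1/3}$ and applies H\"older with the triple $(3,6,2)$ to reach $(\int|f^{2}K^{5}|)^{1/3}T_{2}^{1/6}T_{1}^{1/2}$. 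Since $T_{1}$ and $T_{2}$ share the bound $PR^{13/4-\tau}$, both routes collapse to the same numerical estimate.

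What this buys the paper is simplicity: the bound $\sup_{\mathfrak{k}(X)}|f|\ll PX^{-1/3}+X^{1/2+\varepsilon}\ll X^{1/2+\varepsilon}$ is an immediate consequence of the classical approximation \eqref{eq:approx. for f in general} together with Theorem~4.2 of \cite{vaughan1997}, and needs no further argument. Your route, by contrast, requires the analogous pointwise bound for $K$ on arcs with $q$ ranging up to $2PY^{3}$, and you rightly flag this as ``the one real difficulty''. Your sketch for it --- approximate each inner smooth sum on the induced arc, handle primes dividing $q$ separately, and appeal to smoothness to control the error --- is plausible, but the standard major-arc approximations for smooth Weyl sums (such as Lemma~8.5 of \cite{wooley1991}, which underlies \eqref{eq:approx for K on P(q,a)}) are formulated for much narrower arcs, so turning this sketch into a proof takes genuine additional effort. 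The paper's choice to pivot on $f$ rather than $K$ sidesteps this entirely, and since you only use the weakened form $|K|\ll X^{1/2+\varepsilon}$ anyway, you gain nothing by insisting on $K$.
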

\begin{proof}
Applications of Bessel's inequality and H\"{o}lder's inequality reveal
that
\begin{align}
S(\mathfrak{k}(X))&\leq\int_{\mathfrak{k}(X)}|f(\alpha)^{2}K(\alpha)^{2}h(\alpha)^{4}|\:\mathrm{d}\alpha\label{eq:Bessel on S(k(X))}\\
&\leq\Big(\sup_{\alpha\in\mathfrak{k}(X)}|f(\alpha)|\Big)^{1/3}\int_{0}^{1}|f(\alpha)^{5/3}K(\alpha)^{2}h(\alpha)^{4}|\:\mathrm{d}\alpha.\label{eq:Holder on S(k(X))}\end{align}
On recalling that $X\geq P^{6/5}$, successive applications of \eqref{eq:approx. for f in general},
\eqref{eq:estimate for v(beta)} and Theorem 4.2 of \cite{vaughan1997}
give rise to the bound
\begin{equation}
\sup_{\alpha\in\mathfrak{k}(X)}|f(\alpha)|\ll PX^{-1/3}+X^{1/2+\varepsilon}\ll X^{1/2+\varepsilon}.\label{eq:estimate for f on k(X)}\end{equation}
This together with another use of H\"{o}lder's inequality on \eqref{eq:Holder on S(k(X))}
lead to
\begin{equation}
S(\mathfrak{k}(X))\ll\Big(X^{1/2+\varepsilon}\int_{\mathfrak{k}(X)}|f(\alpha)^{2}K(\alpha)^{5}|\:\mathrm{d}\alpha\Big)^{1/3}T_{2}^{1/6}T_{1}^{1/2}.\label{eq:Holder on S(k(X))}\end{equation}
Applications of \eqref{eq:estimate for f^2K^5} as well as Corollary
\ref{lem:mean value of f^2h^6} thus yield 
\[
S(\mathfrak{k}(X))\ll P^{\varepsilon}(X^{1/2}P^{4+\lambda}Y^{-1-\lambda}(PY^{3}X^{-1})^{1/2})^{1/3}(PR^{13/4-\tau})^{2/3}.\]
A modicum of computation confirms that this is indeed the bound in the statement of the lemma.
\end{proof}
The splitting in \eqref{eq:splitting D} reveals that
\[
S(\mathfrak{D})=\sum_{0\leq l\leq c\log P}S(\mathfrak{k}(2^{l}P^{6/5}))\ll P^{13/6+\lambda/3}Y^{1/6-\lambda/3}R^{13/6}.\]
With reference to \eqref{eq:parameters for existence} and the value
of $\lambda$, we have the following result.
\begin{prop}
\label{pro:S(D)}Provided that $\theta\leq2/9$, we have
\[
S(\mathfrak{D})\ll P^{175/79-17\tau/237}R^{13/6}.\]
\end{prop}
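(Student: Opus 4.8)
The plan is to obtain Proposition~\ref{pro:S(D)} as an immediate consequence of Lemma~\ref{lem:estimate for S(k(X))} combined with the dyadic decomposition \eqref{eq:splitting D} of $\mathfrak{D}$; once these are in hand, nothing remains beyond a bookkeeping computation with the exponents.

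First I would invoke \eqref{eq:splitting D}, which writes $\mathfrak{D}$ as the disjoint union of the arcs $\mathfrak{k}(2^{l}P^{6/5})$ with $0\leq l\leq c\log P$, where $c=86/(395\log2)$; there are $O(\log P)$ such values of $l$. For each of them the parameter $X=2^{l}P^{6/5}$ satisfies $P^{6/5}<X\leq PY^{3}$, so, under the standing hypothesis $\theta\leq2/9$, Lemma~\ref{lem:estimate for S(k(X))} applies and gives
\[
S(\mathfrak{k}(2^{l}P^{6/5}))\ll P^{13/6+\lambda/3+\varepsilon}Y^{1/6-\lambda/3}R^{13/6-2\tau/3}
\]
uniformly in $l$ (the bound there being independent of $X$). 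Summing over the $O(\log P)$ admissible $l$, absorbing the logarithm into $P^{\varepsilon}$, and using $R\geq1$ to pass from $R^{13/6-2\tau/3}$ to $R^{13/6}$, I would arrive at the estimate
\[
S(\mathfrak{D})\ll P^{13/6+\lambda/3}Y^{1/6-\lambda/3}R^{13/6}
\]
recorded just before the statement.

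It then remains only to insert the numerical values $Y=P^{11/79}$ from \eqref{eq:parameters for existence} and $\lambda=3/34-\tau/4$ and to simplify the exponent of $P$. Collecting the $\lambda$-dependent terms, that exponent equals $13/6+\tfrac{11}{79}\cdot\tfrac16+\tfrac{\lambda}{3}\bigl(1-\tfrac{11}{79}\bigr)=13/6+\tfrac{11}{474}+\tfrac{68\lambda}{237}$; substituting $\lambda=3/34-\tau/4$ reduces $\tfrac{68\lambda}{237}$ to $\tfrac{2}{79}-\tfrac{17\tau}{237}$, and placing $13/6+\tfrac{11}{474}+\tfrac{2}{79}$ over the common denominator $474$ collapses it to $1050/474=175/79$. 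This yields the exponent $175/79-17\tau/237$ asserted by the proposition.

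I do not anticipate a genuine obstacle here: all of the analytic input has already been carried out in Lemma~\ref{lem:estimate for S(k(X))}, and the only mildly delicate point is the suppression of the factor $P^{\varepsilon}\log P$ produced by the dyadic sum. For the range of $\theta$ that matters to Theorem~\ref{thm:existence} this costs nothing, since the factor $R^{-2\tau/3}=P^{-2\tau\theta}$ discarded above in fact leaves a small power of $P$ in hand. The real hazard is purely arithmetical, namely slipping in the manipulation of the fractions $13/6$, $11/474$ and $2/79$ or in the reduction $68\lambda/237=2/79-17\tau/237$; I would double-check these.
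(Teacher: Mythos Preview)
Your proposal is correct and follows essentially the same route as the paper: invoke the dyadic decomposition \eqref{eq:splitting D}, apply Lemma~\ref{lem:estimate for S(k(X))} uniformly to each piece, sum over $O(\log P)$ values of $l$, and then substitute $Y=P^{11/79}$ and $\lambda=3/34-\tau/4$ to collapse the exponent to $175/79-17\tau/237$. Your explicit arithmetic with the fractions is accurate, and your observation that the discarded factor $R^{-2\tau/3}=P^{-2\tau\theta}$ supplies the power saving needed to absorb the stray $P^{\varepsilon}\log P$ is exactly the point the paper leaves implicit.
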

The reader can check that this bound is $o(PR^4)$ when $\theta\geq192/869$.
\par The treatment of $S(\mathfrak{U})$ is similar. For any non-negative
integer $l$, if 
\[(2^{-l}P^{6/5},2^{-l+1}P^{6/5}]\subseteq(R,P^{6/5}],\]
then $l$ satisfies the constraint $R\leq2^{-l}P^{6/5}\leq P^{6/5}$.
This implies that $0\leq l\leq c'\log P$, where $c'=6/(5\log2)$.
It follows that
\begin{equation}
\mathfrak{U}\subseteq\bigcup_{0\leq l\leq c^{'}\log P}\mathfrak{k}(2^{-l}P^{6/5}),\label{eq:splitting for U}\end{equation}
We therefore need a bound for $S(\mathfrak{k}(X))$ in the case where $R\leq X\leq P^{6/5}$. This is provided by the following lemma.
\begin{lem}
\label{lem:estimate for S(k(X)) (extended)}Whenever $R\leq X\leq P^{6/5}$
and $\theta\leq2/9$, we have
\[
S(\mathfrak{k}(X))\ll P^{7/3}R^{13/6-2\tau/3}X^{\varepsilon-1/3}+XP^{1+\varepsilon}R^2.\]\end{lem}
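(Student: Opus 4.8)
The plan is to follow the same template as the proof of Lemma~\ref{lem:estimate for S(k(X))}, namely Bessel's inequality followed by H\"older's inequality against the mean values of Section~4, but with one essential change. For $X\le P^{6/5}$ the trivial supremum bound \eqref{eq:estimate for f on k(X)} only delivers $\sup_{\alpha\in\mathfrak{k}(X)}|f(\alpha)|\ll PX^{-1/3}$, which is far too large to be useful here; instead one must exploit the mean-square cancellation in the major-arc approximant $f^{*}$ encapsulated by the estimate $U(X)\ll P^{5}X^{\varepsilon-4/3}$ from Lemma~\ref{lem:mean of f*^8 and f*^16/3}.

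First I would record, exactly as in \eqref{eq:Bessel on S(k(X))}, that Bessel's inequality gives
\[
S(\mathfrak{k}(X))\le\int_{\mathfrak{k}(X)}|f(\alpha)^{2}K(\alpha)^{2}h(\alpha)^{4}|\:\mathrm{d}\alpha.
\]
Since $\mathfrak{k}(X)=\mathfrak{M}(2X)\backslash\mathfrak{M}(X)$ lies essentially inside $\mathfrak{M}(P^{6/5})$, the estimates \eqref{eq:approx. for f in general}, \eqref{eq:estimate for v(beta)} and Theorem~4.2 of \cite{vaughan1997} yield, in the manner of \eqref{eq:estimate for f on k(X)}, the pointwise relation $f(\alpha)=f^{*}(\alpha)+O(X^{1/2+\varepsilon})$ for all $\alpha\in\mathfrak{k}(X)$, with $f^{*}$ as in \eqref{eq:f*}. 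Consequently $|f(\alpha)|^{2}\ll|f^{*}(\alpha)|^{2}+X^{1+\varepsilon}$, so that
\[
S(\mathfrak{k}(X))\ll\int_{\mathfrak{k}(X)}|f^{*}(\alpha)^{2}K(\alpha)^{2}h(\alpha)^{4}|\:\mathrm{d}\alpha+X^{1+\varepsilon}\int_{0}^{1}|K(\alpha)^{2}h(\alpha)^{4}|\:\mathrm{d}\alpha.
\]
The restriction $\theta\le2/9$ permits the use of Corollary~\ref{lem:estimate for K^2h^4}, which bounds the second integral by $P^{1+\varepsilon}R^{2}$ and hence accounts for the summand $XP^{1+\varepsilon}R^{2}$ in the statement.

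It remains to estimate the first integral, and here I would apply H\"older's inequality twice. Writing $|f^{*}|^{2}=(|f^{*}|^{8})^{1/4}$ and extending the remaining factor to $[0,1)$,
\[
\int_{\mathfrak{k}(X)}|f^{*}(\alpha)^{2}K(\alpha)^{2}h(\alpha)^{4}|\:\mathrm{d}\alpha\le U(X)^{1/4}\Big(\int_{0}^{1}|K(\alpha)|^{8/3}|h(\alpha)|^{16/3}\:\mathrm{d}\alpha\Big)^{3/4},
\]
and then, splitting $|K|^{8/3}|h|^{16/3}=(|K|^{8})^{1/9}(|K|^{2}|h|^{6})^{8/9}$,
\[
\int_{0}^{1}|K(\alpha)|^{8/3}|h(\alpha)|^{16/3}\:\mathrm{d}\alpha\le U_{1}^{1/9}T_{2}^{8/9}.
\]
Feeding in $U(X)\ll P^{5}X^{\varepsilon-4/3}$ from Lemma~\ref{lem:mean of f*^8 and f*^16/3}, $U_{1}\ll P^{5}$ from \eqref{eq:|K|^8}, and $T_{2}\ll PR^{13/4-\tau}$ from Corollary~\ref{lem:mean value of f^2h^6}, and collecting exponents, one finds $P^{5/4+5/12+2/3}=P^{7/3}$, $R^{(13/4-\tau)\cdot2/3}=R^{13/6-2\tau/3}$ and $X^{\varepsilon-1/3}$, which is precisely the first summand. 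Adding the two contributions completes the argument.

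The step requiring the most care is the pointwise approximation $f(\alpha)=f^{*}(\alpha)+O(X^{1/2+\varepsilon})$ valid uniformly on $\mathfrak{k}(X)$ for the whole range $R\le X\le P^{6/5}$: when $\alpha\in\mathfrak{k}(X)\backslash\mathfrak{M}(P^{6/5})$ one has $f^{*}(\alpha)=0$, so one must verify that $q^{-1}S(q,a)v(\alpha-a/q)\ll X^{1/2+\varepsilon}$ there, which follows by separating the cases $q>P^{6/5}$ and $P^{3}|\alpha-a/q|>P^{6/5}q^{-1}$ and invoking $q^{-1}S(q,a)\ll q^{-1/3}$ together with \eqref{eq:estimate for v(beta)}, exactly as in the passage leading to \eqref{eq:estimate for f on k(X)}. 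Everything else is routine H\"older bookkeeping against mean values already in hand.
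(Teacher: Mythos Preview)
Your argument is correct and coincides with the paper's proof. In fact your two-stage H\"older application collapses to the single inequality $I_{1}(X)\le T_{2}^{2/3}U_{1}^{1/12}U(X)^{1/4}$ used in the paper, with identical exponents, and the remaining steps (Bessel, the splitting via $f=f^{*}+O(X^{1/2+\varepsilon})$, and the appeal to Corollary~\ref{lem:estimate for K^2h^4} for the second term) match exactly; your closing paragraph on the boundary case $\mathfrak{k}(X)\not\subseteq\mathfrak{M}(P^{6/5})$ is a welcome verification of a point the paper leaves implicit.
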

\begin{proof}
By \eqref{eq:approx. for f in general}, when $\alpha\in\mathfrak{k}(X)$,
we have
\begin{equation}
f(\alpha)=f^{*}(\alpha)+O(X^{1/2+\varepsilon}).\label{eq:approx for f on k(X)}\end{equation}
Inequality \eqref{eq:Bessel on S(k(X))} followed by this estimate implies that
\begin{equation}
S(\mathfrak{k}(X))\ll I_{1}(X)+X^{1+\varepsilon}\int_{0}^{1}|K(\alpha)^{2}h(\alpha)^{4}|\:\mathrm{d}\alpha,\label{eq:splitting up S(k(X))}\end{equation}
where
\begin{equation}
I_{1}(X)=\int_{\mathfrak{k}(X)}|f^{*}(\alpha)^{2}K(\alpha)^{2}h(\alpha)^{4}|\:\mathrm{d}\alpha.\label{eq:I_1}\end{equation}
Corollary \ref{lem:estimate for K^2h^4} implies that when $\theta\leq2/9$
and $X\leq P^{6/5}$, the second term in \eqref{eq:splitting up S(k(X))}
is $O(XP^{1+\varepsilon}R^{2})$. Meanwhile, an application of H\"{o}lder's inequality yields
\[
I_{1}(X)\leq T_{2}^{2/3}U_1^{1/12}U(X)^{1/4}.\]
Successive applications of Lemmata \ref{lem:mean of f*^8 and f*^16/3},
\ref{lem:mean value of f^2h^6} and \eqref{eq:|K|^8} give
rise to
\begin{align*}
I_{1}(X) & \ll(PR^{13/4-\tau})^{2/3}(P^{5})^{1/12}(P^{5}X^{\varepsilon-4/3})^{1/4}\\
 & =P^{7/3}R^{13/6-2\tau/3}X^{\varepsilon-1/3}.\end{align*}
The lemma then follows by inserting this bound into \eqref{eq:splitting up S(k(X))}.\end{proof}
\par The relation \eqref{eq:splitting for U} implies that
\[
S(\mathfrak{U})\leq\sum_{0\leq l\leq c^{'}\log P}S(\mathfrak{k}(2^{-l}P^{6/5})).\]
Applying Lemma \ref{lem:estimate for S(k(X)) (extended)} to each
term in the sum gives
\begin{align}
S(\mathfrak{U})&\ll \sum_{0\leq l\leq c^{'}\log P}\Big(P^{7/3}R^{13/6-2\tau/3}(2^{-l}P^{6/5})^{\varepsilon-1/3}+(2^{-l}P^{6/5})P^{1+\varepsilon}R^2\Big)\\ &\ll P^{29/15+\varepsilon}R^{13/6-2\tau/3}+P^{11/5+\varepsilon}R^2.\end{align}
The constraint $\theta\leq1/3$ implies that the second term here dominates. This is summarised by the following proposition.
\begin{prop}
\label{pro:S(U)}Whenever $\theta$ is a real number with $\theta\leq2/9$,
we have
\[
S(\mathfrak{U})\ll P^{11/5+\varepsilon}R^2.\]
\end{prop}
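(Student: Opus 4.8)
The plan is to feed the dyadic dissection of $\mathfrak{U}$ supplied by \eqref{eq:splitting for U} into the single-arc estimate of Lemma \ref{lem:estimate for S(k(X)) (extended)}. Recall that $\mathfrak{U}=\mathfrak{M}(P^{6/5})\setminus\mathfrak{M}(R)$ is contained in the union of the $O(\log P)$ pairwise disjoint arcs $\mathfrak{k}(2^{-l}P^{6/5})$ with $0\le l\le c'\log P$, where $c'=6/(5\log 2)$. First I would record, as in the line following \eqref{eq:splitting for U}, that
\[
S(\mathfrak{U})\le\sum_{0\le l\le c'\log P}S\bigl(\mathfrak{k}(2^{-l}P^{6/5})\bigr).
\]
This is legitimate because, by Bessel's inequality \eqref{eq:Bessel on S(k(X))}, each $S(\mathfrak{k}(X))$ is at most the truncated integral $\int_{\mathfrak{k}(X)}|f(\alpha)^{2}K(\alpha)^{2}h(\alpha)^{4}|\,\mathrm{d}\alpha$, and these integrals add up over the disjoint pieces covering $\mathfrak{U}$. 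Only the levels $l$ for which $\mathfrak{k}(2^{-l}P^{6/5})$ actually meets $\mathfrak{U}$ contribute, so that in effect $X:=2^{-l}P^{6/5}$ runs dyadically over $[R,P^{6/5}]$, which is exactly the range of validity of Lemma \ref{lem:estimate for S(k(X)) (extended)}.

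Applying that lemma to every summand yields
\[
S\bigl(\mathfrak{k}(2^{-l}P^{6/5})\bigr)\ll P^{7/3}R^{13/6-2\tau/3}\bigl(2^{-l}P^{6/5}\bigr)^{\varepsilon-1/3}+\bigl(2^{-l}P^{6/5}\bigr)P^{1+\varepsilon}R^{2},
\]
and the next step is to sum the two parts over $l$. The second part is a geometric progression with common ratio $\tfrac12$, hence controlled by its largest member, the term $l=0$ in which $2^{-l}P^{6/5}=P^{6/5}$; summing it gives $P^{6/5}\cdot P^{1+\varepsilon}R^{2}=P^{11/5+\varepsilon}R^{2}$. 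The first part is a geometric progression whose ratio $2^{1/3-\varepsilon}$ exceeds $1$, hence controlled at the opposite end of the range, where $2^{-l}P^{6/5}$ has order $R$; on recalling \eqref{eq:parameters for existence} this produces $P^{29/15+\varepsilon}R^{13/6-2\tau/3}$. Thus $S(\mathfrak{U})\ll P^{29/15+\varepsilon}R^{13/6-2\tau/3}+P^{11/5+\varepsilon}R^{2}$.

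Finally I would compare the two contributions. Writing $R=P^{3\theta}$, the quotient of the first by the second equals $P^{-4/15}R^{1/6-2\tau/3}=P^{\theta(1/2-2\tau)-4/15}$, which is $o(1)$ whenever $\theta<8/15$; in particular this holds under the hypothesis $\theta\le 2/9$ (and, as the preceding discussion observes, already whenever $\theta\le 1/3$). Hence the $P^{11/5+\varepsilon}R^{2}$ term dominates, which is the assertion of the proposition. I do not expect any real obstacle here: every analytic ingredient --- the approximation $f=f^{*}+O(X^{1/2+\varepsilon})$ on $\mathfrak{k}(X)$, the mean values $\int_{0}^{1}|K^{2}h^{6}|\,\mathrm{d}\alpha$ and $\int_{0}^{1}|K|^{8}\,\mathrm{d}\alpha$, the bound for $\int_{0}^{1}|K^{2}h^{4}|\,\mathrm{d}\alpha$, and the major-arc moments $U(X)$ of $f^{*}$ --- has already been established. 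The only points demanding attention are getting the direction of domination right in each of the two geometric sums (the first controlled where $X$ has order $R$, the second where $X$ has order $P^{6/5}$) and the closing exponent comparison, which is where the restriction on $\theta$ enters.
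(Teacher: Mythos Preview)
Your approach is exactly the paper's: Bessel, then the dyadic cover \eqref{eq:splitting for U}, then Lemma~\ref{lem:estimate for S(k(X)) (extended)} on each piece. Your justification for passing from $S(\mathfrak U)$ to the sum of the $S(\mathfrak k(2^{-l}P^{6/5}))$ via additivity of the Bessel integrand over disjoint pieces is correct and in fact more careful than what the paper writes.

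There is, however, an internal inconsistency in your handling of the first geometric sum. You rightly note that its ratio $2^{1/3-\varepsilon}$ exceeds $1$, so it is controlled at the end where $X=2^{-l}P^{6/5}$ has order $R$; but the value you then record, $P^{29/15+\varepsilon}R^{13/6-2\tau/3}$, is the value at the \emph{opposite} endpoint $X=P^{6/5}$ (note $29/15=7/3-\tfrac13\cdot\tfrac65$). At $X$ of order $R$ the term is
\[
P^{7/3}R^{13/6-2\tau/3}\cdot R^{\varepsilon-1/3}=P^{7/3}R^{11/6+\varepsilon-2\tau/3}.
\]
This changes the closing comparison: the quotient of the corrected first contribution by the second is
\[
P^{7/3-11/5}R^{-1/6-2\tau/3}=P^{\,2/15-\theta/2-2\theta\tau},
\]
which is $o(1)$ only for $\theta>4/(15+60\tau)>2/9$. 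Under the hypothesis $\theta\le 2/9$ it is therefore the \emph{first} term that dominates, and the clean bound $S(\mathfrak U)\ll P^{11/5+\varepsilon}R^{2}$ does not follow from the argument as written. The paper's own summation line carries the same slip. For the downstream application nothing is lost: one checks that $P^{7/3}R^{11/6+\varepsilon-2\tau/3}=o(PR^{4})$ whenever $\theta>8/39$, and since $192/869>8/39$ the minor-arc goal \eqref{eq:aim of minor arc estimate for existence} survives intact throughout the operative range $192/869\le\theta\le 2/9$.
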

In particular, when $\theta>1/5$, the bound here is $o(PR^4)$.
\par For the treatment of $S(\mathfrak{A})$ in \eqref{eq:splitting for existence},
we apply Bessel's inequality and H\"{o}lder's inequality to get
\begin{align}
S(\mathfrak{A})\leq & \int_{\mathfrak{A}}|f^{*}(\alpha)^{2}K(\alpha)^{2}h(\alpha)^{4}|\:\mathrm{d}\alpha\nonumber \\
\leq & U_2^{1/12}\Big(\int_{\mathfrak{M}(R)}|f^*(\alpha)^{7/3}h(\alpha)^6|\;\mathrm{d}\alpha\Big)^{2/3}U_1^{1/4}.\label{eq:Holder on S(A)}\end{align}
According to \eqref{eq:U_3} and \eqref{eq:f}, the integral here is just $U_3(7/3,R;\mathcal{B})$ with $\mathcal{B}=\mathcal{A}(R)$. Applying Lemmata \ref{lem:mean of f*^8 and f*^16/3}, \ref{lem:mean of f*^vh^6}
and \eqref{eq:|K|^8} on \eqref{eq:Holder on S(A)} immediately
gives
\[
S(\mathfrak{A})\ll(P^{7/3}L^{-4/9})^{1/12}(P^{-2/3}R^{6})^{2/3}(P^{5})^{1/4},\]
the result of which is stated in the following proposition.
\begin{prop}
\label{pro:S(A)}For any positive number $\theta$ with $\theta\leq1/3$,
we have
\[
S(\mathfrak{A})\ll PR^4L^{-1/27}.\]
\end{prop}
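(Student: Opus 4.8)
The plan is to follow the route signposted in the paragraph preceding the proposition, coupling Bessel's inequality with the mean value estimates of Section 4. Applying Bessel's inequality to the coefficients $\rho_{\theta}(n;\mathfrak{A})=\int_{\mathfrak{A}}f(\alpha)K(\alpha)h(\alpha)^{2}e(-n\alpha)\,\mathrm{d}\alpha$ gives $S(\mathfrak{A})\le\int_{\mathfrak{A}}|f(\alpha)^{2}K(\alpha)^{2}h(\alpha)^{4}|\,\mathrm{d}\alpha$. Since $\mathfrak{A}=\mathfrak{M}(R)\backslash\mathfrak{P}$ lies inside $\mathfrak{M}(P^{6/5})$, the approximation \eqref{eq:approx. for f in general} yields $f(\alpha)=f^{*}(\alpha)+O(R^{1/2+\varepsilon})$ on $\mathfrak{A}$, so replacing $f$ by $f^{*}$ introduces an error $\ll R^{1+\varepsilon}\int_{0}^{1}|K(\alpha)^{2}h(\alpha)^{4}|\,\mathrm{d}\alpha$; by Corollary \ref{lem:estimate for K^2h^4} this is $\ll P^{1+\varepsilon}R^{3+\varepsilon}$, which is comfortably $\ll PR^{4}L^{-1/27}$. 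It thus suffices to estimate $\int_{\mathfrak{A}}|f^{*}(\alpha)^{2}K(\alpha)^{2}h(\alpha)^{4}|\,\mathrm{d}\alpha$.

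Here I would apply H\"{o}lder's inequality with exponents $12$, $3/2$ and $4$ (note $\tfrac{1}{12}+\tfrac{2}{3}+\tfrac{1}{4}=1$), splitting the integrand as $|f^{*}|^{4/9}\cdot|f^{*}|^{14/9}|h|^{4}\cdot|K|^{2}$ and enlarging, by positivity, the domains of the last two resulting integrals to $\mathfrak{M}(R)$ and $[0,1)$. This gives
\[
\int_{\mathfrak{A}}|f^{*}(\alpha)^{2}K(\alpha)^{2}h(\alpha)^{4}|\,\mathrm{d}\alpha\le\Big(\int_{\mathfrak{A}}|f^{*}(\alpha)|^{16/3}\,\mathrm{d}\alpha\Big)^{1/12}\Big(\int_{\mathfrak{M}(R)}|f^{*}(\alpha)^{7/3}h(\alpha)^{6}|\,\mathrm{d}\alpha\Big)^{2/3}\Big(\int_{0}^{1}|K(\alpha)|^{8}\,\mathrm{d}\alpha\Big)^{1/4}.
\]
The first factor is $U_{2}^{1/12}$, and Lemma \ref{lem:mean of f*^8 and f*^16/3} gives $U_{2}\ll P^{7/3}L^{-4/9}$. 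Since $h(\alpha)=j(\alpha;\mathcal{A}(R))$ with $\mathcal{A}(R)\subseteq[1,R]$, by \eqref{eq:f} and \eqref{eq:j(alpha)}, the middle integral is exactly $U_{3}(7/3;R;\mathcal{A}(R))$; as $7/3>2$, Lemma \ref{lem:mean of f*^vh^6} bounds it by $P^{-2/3}R^{6}$. The last factor is $U_{1}^{1/4}$, with $U_{1}\ll P^{5}$ by \eqref{eq:|K|^8}.

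Combining, I arrive at
\[
S(\mathfrak{A})\ll(P^{7/3}L^{-4/9})^{1/12}(P^{-2/3}R^{6})^{2/3}(P^{5})^{1/4},
\]
and a short exponent count closes the argument: the power of $P$ is $\tfrac{7}{36}-\tfrac{4}{9}+\tfrac{5}{4}=1$, the power of $R$ is $6\cdot\tfrac{2}{3}=4$, and the power of $L$ is $-\tfrac{4}{9}\cdot\tfrac{1}{12}=-\tfrac{1}{27}$, which is precisely the claimed $S(\mathfrak{A})\ll PR^{4}L^{-1/27}$. I anticipate no serious obstacle, since all of the analytic weight is carried by Lemmas \ref{lem:mean of f*^8 and f*^16/3} and \ref{lem:mean of f*^vh^6}, which are quoted. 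The only step requiring attention is the bookkeeping: one must check that this particular H\"{o}lder dissection is the one that simultaneously reconstitutes $f^{*}(\alpha)^{2}K(\alpha)^{2}h(\alpha)^{4}$, uses the three available mean values $U_{2}$, $U_{3}(7/3;\cdot;\cdot)$ and $U_{1}$, and makes the powers of $P$ cancel down to exactly $P^{1}$, so that the saving $L^{-4/9}$ from $U_{2}$ is carried over, unweakened by any positive power of $P$, to the final $L^{-1/27}$.
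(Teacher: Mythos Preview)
Your proof is correct and follows precisely the paper's route: Bessel's inequality, then H\"{o}lder with exponents $12$, $3/2$, $4$ to split into $U_2^{1/12}\cdot U_3(7/3;R;\mathcal{A}(R))^{2/3}\cdot U_1^{1/4}$, and finally the quoted lemmata to reach $PR^4L^{-1/27}$. You are in fact more careful than the paper in justifying the passage from $f$ to $f^{*}$ (the paper writes $\int_{\mathfrak{A}}|f^{*}|^{2}|K|^{2}|h|^{4}$ straight out of Bessel without comment); note only that your appeal to Corollary~\ref{lem:estimate for K^2h^4} carries the hypothesis $R\le(P/2)^{2/3}$, i.e.\ $\theta\le2/9$, which is narrower than the stated $\theta\le1/3$ but harmless since the proposition is only ever applied in that range.
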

Propositions \ref{pro:S(m)}, \ref{pro:S(D)}, \ref{pro:S(U)} and
\ref{pro:S(A)}, together with \eqref{eq:splitting for existence},
thus imply the following.
\begin{prop}
Let $\theta$ be a real number satisfying $192/869\leq\theta\leq2/9$.
Then
\[
S(\mathfrak{p})\ll PR^4L^{-1/50}.\]
\end{prop}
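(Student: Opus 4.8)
The plan is to assemble the four preceding propositions via the decomposition \eqref{eq:splitting for existence}, checking that each of $S(\mathfrak{m})$, $S(\mathfrak{D})$, $S(\mathfrak{U})$ and $S(\mathfrak{A})$ is $\ll PR^{4}L^{-1/50}$ throughout the range $192/869\leq\theta\leq2/9$. The upper constraint $\theta\leq2/9$ is precisely what licenses the application of Propositions \ref{pro:S(m)}, \ref{pro:S(D)} and \ref{pro:S(U)}, while Proposition \ref{pro:S(A)} requires only $\theta\leq1/3$, so all four bounds are at our disposal. Recalling from \eqref{eq:parameters for existence} that $R=P^{3\theta}$ and $L=(\log P)^{10}$, and that $\tau$ is a fixed positive number, the task reduces to a short exponent computation for each summand, together with the observation that any fixed positive power of $P$ is $\ll L^{-1/50}$ once $N$ (and hence $P$) is large.

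First I would handle the two pieces that actually determine the value $192/869$. Dividing the bound of Proposition \ref{pro:S(m)} by $PR^{4}$ and substituting $R=P^{3\theta}$ gives $S(\mathfrak{m})/(PR^{4})\ll P^{\psi_{1}(\theta)}$, where
\[
\psi_{1}(\theta)=\frac{96}{79}-\frac{11\theta}{2}-\frac{\tau}{6}-2\tau\theta .
\]
Since $869=11\cdot79$ we have $96/79=1056/869$ and also $11\theta/2=1056/869$ when $\theta=192/869$, so the first two terms cancel exactly at that value of $\theta$, leaving $\psi_{1}(192/869)=-\tau(1/6+384/869)<0$; as $\psi_{1}$ is decreasing in $\theta$, we obtain $\psi_{1}(\theta)\leq-c_{1}$ for a fixed $c_{1}=c_{1}(\tau)>0$ on the whole range. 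The same manipulation applied to Proposition \ref{pro:S(D)} gives $S(\mathfrak{D})/(PR^{4})\ll P^{\psi_{2}(\theta)}$ with $\psi_{2}(\theta)=96/79-11\theta/2-17\tau/237$, which at $\theta=192/869$ collapses to the single negative term $-17\tau/237$, whence $\psi_{2}(\theta)\leq-c_{2}$ for some fixed $c_{2}>0$. (Here $192/869$ is the limiting threshold, as $\tau\to0^{+}$, below which these minor-arc estimates would fail to beat $PR^{4}$; for the fixed small value of $\tau$ the saving is genuine, if tiny.)

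For the remaining two pieces, Proposition \ref{pro:S(U)} yields $S(\mathfrak{U})/(PR^{4})\ll P^{6/5+\varepsilon-6\theta}$, and since $192/869>1/5$ one checks $6/5-6\cdot192/869=-546/4345<0$; taking $\varepsilon$ sufficiently small gives $S(\mathfrak{U})/(PR^{4})\ll P^{-c_{3}}$ with $c_{3}>0$ fixed. Proposition \ref{pro:S(A)} already supplies $S(\mathfrak{A})\ll PR^{4}L^{-1/27}$, and this is the term governing the power of $L$ in the final bound, since $1/27\geq1/50$. Because $L=(\log P)^{10}$, each polynomial saving satisfies $P^{-c_{i}}\ll L^{-1/50}$ for $i=1,2,3$, so inserting all four estimates into \eqref{eq:splitting for existence} yields $S(\mathfrak{p})\ll PR^{4}L^{-1/50}$. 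There is no real obstacle beyond the bookkeeping: the one point needing care is the exponent arithmetic at $\theta=192/869$, where the identity $96/79=11\cdot192/(2\cdot869)$ makes the polynomial savings in $S(\mathfrak{m})$ and $S(\mathfrak{D})$ degenerate so that the surviving saving comes purely from the fixed parameter $\tau$.
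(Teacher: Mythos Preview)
Your proposal is correct and follows exactly the paper's approach: the paper simply states that the proposition is a consequence of Propositions \ref{pro:S(m)}, \ref{pro:S(D)}, \ref{pro:S(U)} and \ref{pro:S(A)} together with the splitting \eqref{eq:splitting for existence}, and you have filled in the exponent verifications that the paper leaves implicit. One tiny wording slip: in your first paragraph you presumably mean that any fixed \emph{negative} power of $P$ is $\ll L^{-1/50}$ (equivalently, a saving of any fixed positive power of $P$ beats $L^{-1/50}$), as you correctly write later when invoking $P^{-c_i}\ll L^{-1/50}$.
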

\par A simple averaging argument then reveals that for all except $O(NL^{-1/100})$
integers $n$ with $N<n\leq2N$, we have
\[
\rho_{\theta}(n;\mathfrak{p})\ll P^{-1}R^{2}L^{-1/200}.\]
This together with \eqref{eq:parameters for existence} yield the following proposition.
\begin{prop}
\label{pro:minor arc estimate for existence}Whenever $\theta$ is
a real number satisfying $192/869\leq\theta\leq2/9$, we have
\[
\rho_{\theta}(n;\mathfrak{p})\ll n^{2\theta-1/3}(\log n)^{-1/20}\]
for all integers $n$ with $N<n\leq2N$, with at most $O(N(\log N)^{-1/10})$
exceptions.
\end{prop}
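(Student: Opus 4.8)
The statement follows from the bound $S(\mathfrak{p})\ll PR^{4}L^{-1/50}$ recorded in the immediately preceding proposition by a standard averaging (Chebyshev-type) argument, coupled with a translation of the parameters in \eqref{eq:parameters for existence} back into the variables $n$ and $\log n$. Recall from \eqref{eq:S(n,B)} that
\[
S(\mathfrak{p})=\sum_{N<n\leq2N}|\rho_{\theta}(n;\mathfrak{p})|^{2}.
\]
The plan is to isolate those $n$ for which $|\rho_{\theta}(n;\mathfrak{p})|$ exceeds the threshold $P^{-1}R^{2}L^{-1/200}$, bound the number of such $n$ using the displayed mean square, and then observe that $P^{-1}R^{2}L^{-1/200}$ is of the size $n^{2\theta-1/3}(\log n)^{-1/20}$ claimed in the statement.

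First I would fix $\theta$ with $192/869\leq\theta\leq2/9$, so that the foregoing proposition applies, and introduce the exceptional set
\[
\mathcal{E}=\{n\in\mathbb{Z}\cap(N,2N]:|\rho_{\theta}(n;\mathfrak{p})|>P^{-1}R^{2}L^{-1/200}\}.
\]
Discarding from $S(\mathfrak{p})$ the contribution of all $n\notin\mathcal{E}$ and using the definition of $\mathcal{E}$ for the remaining terms yields the trivial inequality
\[
|\mathcal{E}|\cdot\big(P^{-1}R^{2}L^{-1/200}\big)^{2}\leq\sum_{n\in\mathcal{E}}|\rho_{\theta}(n;\mathfrak{p})|^{2}\leq S(\mathfrak{p})\ll PR^{4}L^{-1/50}.
\]
Rearranging, and noting that $P^{2}R^{-4}L^{1/100}\cdot PR^{4}L^{-1/50}=P^{3}L^{-1/100}$, one obtains $|\mathcal{E}|\ll P^{3}L^{-1/100}$. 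Since $P^{3}\asymp N$ and $L=(\log P)^{10}\asymp(\log N)^{10}$ by \eqref{eq:parameters for existence}, this gives $|\mathcal{E}|\ll N(\log N)^{-1/10}$, which is the exceptional bound asserted in the proposition.

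It remains to treat the integers $n\in(N,2N]$ lying outside $\mathcal{E}$, for which $|\rho_{\theta}(n;\mathfrak{p})|\leq P^{-1}R^{2}L^{-1/200}$ by construction. Because $N<n\leq2N$, the relations $P=(N/4)^{1/3}$, $R=P^{3\theta}$ and $L=(\log P)^{10}$ give $P\asymp n^{1/3}$, $R\asymp n^{\theta}$ and $L\asymp(\log n)^{10}$, whence
\[
P^{-1}R^{2}L^{-1/200}\asymp n^{-1/3}\cdot n^{2\theta}\cdot(\log n)^{-1/20}=n^{2\theta-1/3}(\log n)^{-1/20}.
\]
This delivers $\rho_{\theta}(n;\mathfrak{p})\ll n^{2\theta-1/3}(\log n)^{-1/20}$ for every $n\in(N,2N]\setminus\mathcal{E}$, and since $|\mathcal{E}|\ll N(\log N)^{-1/10}$ the proposition follows. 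There is no real obstacle here; the only point demanding a little care is the bookkeeping with the powers of $L$, namely choosing the threshold exponent so that the saving $L^{-1/50}$ in $S(\mathfrak{p})$ is apportioned to leave $L^{-1/100}$ governing the measure of $\mathcal{E}$ and $L^{-1/200}$ surviving in the pointwise estimate.
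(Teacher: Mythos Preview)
Your proof is correct and follows exactly the approach of the paper, which deduces the proposition from $S(\mathfrak{p})\ll PR^{4}L^{-1/50}$ via the same Chebyshev-type averaging with threshold $P^{-1}R^{2}L^{-1/200}$ and then rewrites the parameters using \eqref{eq:parameters for existence}. The bookkeeping with the powers of $L$ matches the paper's choices precisely.
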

Theorem \ref{thm:existence} follows from Propositions \ref{pro:major arc estimate for existence}
and \ref{pro:minor arc estimate for existence}, recalling that $r_{\theta}(n)\geq\rho_{\theta}(n)$,
and finally by summing over dyadic intervals.
\section{\label{sec:An-asymptotic-formula}The asymptotic formula}
Our goal in this section is to establish Theorem \ref{thm:asymptotic}. Let $N$ be a large integer, $P=(N/4)^{1/3}$, and $R$ be a parameter in the interval $[N^{\theta},(2N)^{\theta}]$. Observe that if $n$ is the integer in \eqref{eq:r(n)} with $N<n\leq2N$, then at least one of $x_{1},x_{2},y_{1}$ and $y_{2}$ is greater
than $P$. We know from the restrictions that $y_{1},y_{2}\leq n^{\theta}$
with $\theta\leq1/3$ that neither $y_{1}^{3}$ nor $y_{2}^{3}$ exceeds
$P$. So one of $x_{1}$ and $x_{2}$ is greater than $P$. For all
integers $ $$n$ with $N<n\leq2N$, we thus define $\sigma_{\theta}(n)$
to be the number of solutions to \eqref{eq:r(n)} with
\begin{equation}
1\leq x_{1},x_{2}\leq2P,\qquad\max\{x_{1},x_{2}\}>P,\qquad1\leq y_{1},y_{2}\leq R.\label{eq:parameters in asymptotic formula}\end{equation}
When $\alpha\in[0,1)$, write
\begin{equation}
F(\alpha)=\sum_{1\leq x\leq2P}e(\alpha x^{3}),\qquad F_{0}(\alpha)=\sum_{1\leq x\leq P}e(\alpha x^{3})\label{eq:F}\end{equation}
and 
\begin{equation}
G(\alpha)=\sum_{1\leq y\leq R}e(\alpha y^{3}).\label{eq:G}\end{equation}
For any measurable set $\mathfrak{B}\subseteq[0,1)$, write
\begin{equation}
\sigma_{\theta}(n;\mathfrak{B})=\int_{\mathfrak{B}}\Big(F(\alpha)^{2}-F_{0}(\alpha)^{2}\Big)G(\alpha)^{2}e(-n\alpha)\mathrm{d}\alpha.\label{eq:sigma(n,B)}\end{equation}
Then by orthogonality, we have $\sigma_{\theta}(n)=\sigma_{\theta}(n;[0,1))$
for all integers $n$ with $N<n\leq2N$.
\par Take $L=(\log P)^{100}.$ Recall the respective definitions \eqref{eq:P(q,a)}
and \eqref{eq:M(q,a;X)} of $\mathfrak{P}$ and $\mathfrak{M}(X)$.
For all integers $a$ and $q$ with $0\leq a\leq q\leq P^{3/4}$ and
$(a,q)=1$, introduce the major arc
\begin{equation}
\mathfrak{N}(q,a)=\mathfrak{M}(q,a;P^{3/4}).\label{eq:N(q,a)}\end{equation}
Write $\mathfrak{N}$ for the union of all these major arcs $\mathfrak{N}(q,a)$,
and $\mathfrak{n}=[0,1)\backslash\mathfrak{N}$ be the corresponding
minor arc.
\section{Major arc estimate}
As in section \ref{sec:Existence}, we first seek approximations for
the generating functions in the integral defining $\sigma_{\theta}(n;\mathfrak{P})$.
Recall the definition \eqref{eq:w} of $w(\beta;Z)$ in section 4. By Theorem 4.1 of \cite{vaughan1997}, when $\alpha\in[0,1)$, $a\in\mathbb{Z}$
and $q\in\mathbb{N}$ with $(a,q)=1$, we have the relation
\begin{equation}
F(\alpha)=q^{-1}S(q,a)w(\alpha-a/q;2P)+O(q^{1/2+\varepsilon}(1+P^{3}|\alpha-a/q|)^{1/2}).\label{eq:approximating F}\end{equation}
In particular, whenever $\alpha\in\mathfrak{P}(q,a)\subseteq\mathfrak{P}$, we get
\begin{equation}
F(\alpha)=q^{-1}S(q,a)w(\alpha-a/q;2P)+O(L^{1/2+\varepsilon}).\label{eq:approx. for F on P(q,a)}\end{equation}
By the same token, when $\alpha\in\mathfrak{P}(q,a)$, we have
\begin{equation}
F_{0}(\alpha)=q^{-1}S(q,a)w(\alpha-a/q;P)+O(L^{1/2+\varepsilon})\label{eq:approx. for F_0}\end{equation}
and \begin{equation}
G(\alpha)=q^{-1}S(q,a)w(\alpha-a/q;R)+O(L^{1/2+\varepsilon}).\label{eq:approx. for G}\end{equation}
When $\beta$ is a real number, write
\begin{equation}
W(\beta)=(w(\beta;2P)^{2}-w(\beta;P)^{2})w(\beta;R)^{2}.\label{eq:W}\end{equation}
We deduce from \eqref{eq:approx. for F on P(q,a)}, \eqref{eq:approx. for F_0}
and \eqref{eq:approx. for G} that
\begin{equation}
(F(\alpha)^{2}-F_{0}(\alpha)^{2})G(\alpha)^{2}=(q^{-1}S(q,a))^{4}W(\alpha-a/q)+O(P^{2}L^{1+\varepsilon}).\label{eq:before integrating in asymptotic}\end{equation}
\par Define $\mathfrak{S}(n;L)$ and $A(q,n)$ as in \eqref{eq:truncated singular series}
and \eqref{eq:A(q,n)} respectively, and write
\begin{equation}
\mathcal{J}(n;L)=\int_{-L/N}^{L/N}W(\beta)e(-n\beta)\:\mathrm{d}\beta.\label{eq:truncated singular integral for asymptotic}\end{equation}
Recall that the measure $\mathfrak{P}$ is $O(L^{3}/N)$. Integrating
both sides of \eqref{eq:before integrating in asymptotic} against
$e(-n\alpha)$ over all $\alpha\in\mathfrak{P}$ thus gives
\begin{equation}
\sigma_{\theta}(n;\mathfrak{P})=\mathfrak{S}(n;L)\mathcal{J}(n;L)+O(P^{-1}L^{4+\varepsilon}).\label{eq:truncated asymptotic for asymptotic}\end{equation}
Recall the definition \eqref{eq:v(beta)} of $v(\beta;Z)$. From \eqref{eq:W},
we get
\begin{align}
W(\beta)&=\Big((v(\beta;P)+w(\beta;P))^{2}-w(\beta;P)^{2}\Big)w(\beta;R)^{2}\nonumber\\&=(v(\beta;P)^{2}+2v(\beta;P)w(\beta;P))w(\beta;R)^{2}.\label{eq:breaking apart W(beta)}\end{align}
With this together with \eqref{eq:estimate for w_0}, \eqref{eq:estimate for v(beta)} and a trivial
estimate for $w(\beta;R)$, we get
\[
W(\beta)\ll P^{2}R^{2}(1+P^{3}|\beta|)^{-4/3}\]
for all real $\beta$. This confirms the absolute and uniform
convergence over $n$ of the singular integral
\begin{equation}
\mathcal{J}(n)=\int_{-\infty}^{\infty}W(\beta)e(-n\beta)\:\mathrm{d}\beta.\label{eq:singular integral for asymptotic}\end{equation}
Also, for all positive integers $n$, we have
\begin{equation}
|\mathcal{J}(n)-\mathcal{J}(n;L)|\ll P^{2}R^{2}\int_{L/N}^{\infty}(1+P^{3}\beta)^{-4/3}\mathrm{d}\beta\ll P^{-1}R^{2}L^{-1/3}.\label{eq:tail of singular integral for asymptotic}\end{equation}
\par The value of the singular integral can be computed as follows. Putting \eqref{eq:breaking apart W(beta)} into \eqref{eq:singular integral for asymptotic} gives
\[
\mathcal{J}(n)=\int_{-\infty}^{\infty}(v(\beta;P)^{2}+2v(\beta;P)w(\beta;P))w(\beta;R)^{2}e(-n\beta)\;\mathrm{d}\beta.\]
By a change of variables, when $Z$ is any positive number and $\beta$ is real, we have
\[
v(\beta;Z)=Zv(\beta Z^3;1)\qquad\text{and}\qquad w(\beta;Z)=Zw(\beta Z^3;1).\]
These two equalities imply that
\begin{equation}
\mathcal{J}(n)=P^{-1}R^2\int_{-\infty}^{\infty}\Big(v(\beta;1)^2+2v(\beta;1)w(\beta;1)\Big)w(\beta R^3P^{-3};1)^2e(-n\beta P^{-3})\;\mathrm{d}\beta.\label{eq:J(n) before error term appears}\end{equation}
Using a first order Taylor approximation, we have
\[
w(\xi;1)=1+O(\min\{1,|\xi|\})\]
for all real $\xi$. Meanwhile, the inequalities \eqref{eq:estimate for v(beta)} and \eqref{eq:estimate for w_0} imply that
\[
v(\beta;1)^2+2v(\beta;1)w(\beta;1)\ll|\beta|^{-4/3}\]
for all real $\beta$. Coupled with \eqref{eq:J(n) before error term appears}, these two facts give rise to the equality
\begin{equation}
\mathcal{J}(n)=P^{-1}R^2(\mathcal{J}^*(n)+E),\label{eq:preliminary asymptotic for J(n)}\end{equation}
where
\begin{equation}
\mathcal{J}^*(n)=\int_{-\infty}^{\infty}\Big(v(\beta;1)^2+2v(\beta;1)w(\beta;1)\Big)e(-n\beta P^{-3})\;\mathrm{d}\beta,\label{eq:J*(n)}\end{equation}
and
\begin{equation}
E\ll\int_{0}^{\infty}\beta^{-4/3}\min\{1,\beta R^3P^{-3}\}\;\mathrm{d}\beta\ll P^{-1}R.\label{eq:error term in J(n)}\end{equation}
According to p. 21-22 of \cite{davenport2005}, we have $\mathcal{J}^*(n)=\Gamma(4/3)^{2}\Gamma(2/3)^{-1}$. This, together with \eqref{eq:preliminary asymptotic for J(n)}, \eqref{eq:J*(n)}, \eqref{eq:error term in J(n)} and our choice of parameters, gives rise to the asymptotic formula
\[
\mathcal{J}(n)=\frac{\Gamma(4/3)^{2}}{\Gamma(2/3)}n^{2\theta-1/3}(1+O(n^{\theta-1/3})).\]
\par Meanwhile, the singular series $\mathfrak{S}(n)$ as defined in \eqref{eq:singular series}
stays absolutely and uniformly convergent. The estimate \eqref{eq:tail of singular series}
remains true for all but $O(NL^{-1/16})$ integers $n$ with $N<n\leq2N$. This together with
\eqref{eq:tail of singular integral for asymptotic} and \eqref{eq:truncated asymptotic for asymptotic}
lead to the following proposition.
\begin{prop}
\label{pro:major arc estimate for asymptotic}Let $\theta$ be a positive
number with $\theta\leq1/3$. Then for all but $O(N(\log N){}^{-6})$
integers $n$ with $N<n\leq2N$, we have
\begin{equation}
\sigma_{\theta}(n;\mathfrak{P})=\frac{\Gamma(4/3)^{2}}{\Gamma(2/3)}\mathfrak{S}(n)P^{-1}R^{2}+O(P^{-1}R^{2}(\log N)^{-6}).\label{eq:major arc estimate for asymptotic}\end{equation}
\end{prop}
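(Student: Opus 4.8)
The proof is a packaging step. My plan is to replace the truncated singular series and singular integral appearing in \eqref{eq:truncated asymptotic for asymptotic} by their completed counterparts, insert the evaluations already obtained in this section, and then check that every resulting error term falls below the target threshold $P^{-1}R^{2}(\log N)^{-6}$.

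First I would record the inputs. On the analytic side, \eqref{eq:tail of singular integral for asymptotic} gives $\mathcal{J}(n;L)=\mathcal{J}(n)+O(P^{-1}R^{2}L^{-1/3})$, while combining \eqref{eq:preliminary asymptotic for J(n)}, \eqref{eq:J*(n)} and \eqref{eq:error term in J(n)} with the Davenport evaluation $\mathcal{J}^{*}(n)=\Gamma(4/3)^{2}\Gamma(2/3)^{-1}$ yields
\[
\mathcal{J}(n)=\frac{\Gamma(4/3)^{2}}{\Gamma(2/3)}P^{-1}R^{2}+O(P^{-2}R^{3});
\]
in particular $\mathcal{J}(n)\ll P^{-1}R^{2}$, and hence $\mathcal{J}(n;L)\ll P^{-1}R^{2}$ too. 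On the arithmetic side, $\mathfrak{S}(n)$ converges absolutely and uniformly with $1\ll\mathfrak{S}(n)\ll(\log\log n)^{4}$, and by \eqref{eq:tail of singular series} the estimate $\mathfrak{S}(n;L)=\mathfrak{S}(n)+O(L^{-1/16})$ holds for all integers $n$ with $N<n\leq2N$ outside an exceptional set $\mathcal{E}$ of cardinality $O(NL^{-1/16})$.

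Next, for $n\notin\mathcal{E}$ I would feed these facts into \eqref{eq:truncated asymptotic for asymptotic} to obtain
\[
\sigma_{\theta}(n;\mathfrak{P})=\bigl(\mathfrak{S}(n)+O(L^{-1/16})\bigr)\bigl(\mathcal{J}(n)+O(P^{-1}R^{2}L^{-1/3})\bigr)+O(P^{-1}L^{4+\varepsilon}),
\]
then expand the product, bounding the cross terms via $|\mathcal{J}(n)|\ll P^{-1}R^{2}$ and $|\mathfrak{S}(n)|\ll(\log\log n)^{4}$ and finally substituting the evaluation of $\mathcal{J}(n)$ above. This leads to
\[
\sigma_{\theta}(n;\mathfrak{P})=\frac{\Gamma(4/3)^{2}}{\Gamma(2/3)}\mathfrak{S}(n)P^{-1}R^{2}+O\bigl(P^{-1}R^{2}L^{-1/16}\bigr)+O\bigl((\log\log n)^{4}P^{-2}R^{3}\bigr)+O(P^{-1}L^{4+\varepsilon}),
\]
the leftover cross term $O(P^{-1}R^{2}L^{-1/16-1/3})$ being absorbed into the first error. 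Recalling $L=(\log P)^{100}$, $P=(N/4)^{1/3}$ and $R\in[N^{\theta},(2N)^{\theta}]$, it then remains only to verify that each of these three error terms is $O(P^{-1}R^{2}(\log N)^{-6})$: the first because $L^{-1/16}=(\log P)^{-25/4}\ll(\log N)^{-6}$; the second because $P^{-2}R^{3}=(R/P)\,P^{-1}R^{2}\ll N^{\theta-1/3}P^{-1}R^{2}$, which for $\theta<1/3$ is far smaller than $P^{-1}R^{2}(\log N)^{-6}$; and the third because $R\gg N^{\theta}$ with $\theta$ a fixed positive constant, so any fixed power of $\log N$ is negligible against $R^{2}$. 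Since $\mathcal{E}$ has cardinality $O(NL^{-1/16})=O(N(\log N)^{-6})$, this is precisely the assertion of the proposition.

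No genuine difficulty arises; the work is entirely bookkeeping. The points needing attention are that the value $L=(\log P)^{100}$ must be large enough for the $L^{-1/16}$ and $L^{-1/3}$ savings (together with the benign factor $(\log\log n)^{4}$) to beat $(\log N)^{-6}$ and for $P^{-1}L^{4+\varepsilon}$ to stay below $P^{-1}R^{2}(\log N)^{-6}$, and that the exceptional set is exactly the one produced by the singular-series tail bound \eqref{eq:tail of singular series}. At the endpoint $\theta=1/3$ the term $O(P^{-2}R^{3})$ ceases to be negligible, but that case is not needed for Theorem \ref{thm:asymptotic}, whose hypothesis is $1/4<\theta<1/3$.
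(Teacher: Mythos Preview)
Your proposal is correct and follows essentially the same route as the paper: feed the tail estimates \eqref{eq:tail of singular integral for asymptotic} and \eqref{eq:tail of singular series} and the evaluation \eqref{eq:preliminary asymptotic for J(n)}--\eqref{eq:error term in J(n)} into \eqref{eq:truncated asymptotic for asymptotic}, then check that every error beats $P^{-1}R^{2}(\log N)^{-6}$ using $L=(\log P)^{100}$. Your remark that the bound $O(P^{-2}R^{3})$ fails to be negligible at the endpoint $\theta=1/3$ is well taken; the paper's own argument has the same limitation, and only the range $\theta<1/3$ is actually used downstream.
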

\section{Minor arc estimate}
As in section \ref{sec:Existence}, we wish to obtain a bound for
the mean square value
\[
\sum_{N<n\leq2N}|\sigma_{\theta}(n;\mathfrak{p})|^{2}\]
which is $o(PR^{4})$. When $\mathfrak{B}$ is a measurable subset
of $[0,1)$, write
\begin{equation}
\Xi(\mathfrak{B})=\int_{\mathfrak{B}}|(F(\alpha)^{2}-F_{0}(\alpha)^{2})^{2}G(\alpha)^{4}|\:\mathrm{d}\alpha.\label{eq:Xi(B)}\end{equation}
An application of Bessel's inequality gives
\begin{equation}
\sum_{N<n\leq2N}|\sigma_{\theta}(n;\mathfrak{p})|^{2}\leq\Xi(\mathfrak{p})\label{eq:Bessel for asymptotic}\end{equation}
We then make use of the pruning $\mathfrak{p}=\mathfrak{n}\cup(\mathfrak{N}\backslash\mathfrak{P})$
and the splitting
\begin{equation}
\Xi(\mathfrak{p})=\Xi(\mathfrak{n})+\Xi(\mathfrak{N}\backslash\mathfrak{P}).\label{eq:pruning for asymptotic}\end{equation}
\par Define $f$ as in section \ref{sec:Existence}. From the factorisation
\[
F(\alpha)^{2}-F_{0}(\alpha)^{2}=f(\alpha)(F(\alpha)+F_{0}(\alpha)),\]
we can expand $\Xi(\mathfrak{n})$ as
\begin{equation}
\Xi(\mathfrak{n})\ll\int_{\mathfrak{n}}|f(\alpha)^{2}F(\alpha)^{2}G(\alpha)^{4}|\:\mathrm{d}\alpha+\int_{\mathfrak{n}}|f(\alpha)^{2}F_{0}(\alpha)^{2}G(\alpha)^{4}|\:\mathrm{d}\alpha.\label{eq:expanding Xi(n)}\end{equation}
An application of H\"{o}lder's inequality yields
\begin{equation}
\int_{\mathfrak{n}}|f(\alpha)^{2}F(\alpha)^{2}G(\alpha)^{4}|\:\mathrm{d}\alpha\leq\Big(\sup_{\alpha\in\mathfrak{n}}|f(\alpha)|\Big)^{2}\int_{0}^{1}|F(\alpha)^{2}G(\alpha)^{4}|\:\mathrm{d}\alpha.\label{eq:splitting up Xi(n)}\end{equation}
A modified version of Weyl's inequality (see, for instance, Lemma
1 of \cite{vaughan1986}) confirms that
\begin{equation}
\sup_{\alpha\in\mathfrak{n}}|f(\alpha)|\ll P^{3/4+\varepsilon}.\label{eq:Weyl's ineq. on f over n}\end{equation}
Meanwhile, Lemma \ref{lem:mean value of f^2g^4} gives the bound
\begin{equation}
\int_{0}^{1}|F(\alpha)^{2}G(\alpha)^{4}|\:\mathrm{d}\alpha\ll P^{\varepsilon}(PR^{2}+P^{-1}R^{9/2}).\label{eq:mean value for F^2G^4}\end{equation}
Hence \eqref{eq:splitting up Xi(n)}, \eqref{eq:Weyl's ineq. on f over n}
and \eqref{eq:mean value for F^2G^4} imply that
\begin{align*}
\int_{\mathfrak{n}}|f(\alpha)^{2}F(\alpha)^{2}G(\alpha)^{4}|\:\mathrm{d}\alpha&\ll(P^{3/4})^{2}P^{\varepsilon}(PR^{2}+P^{-1}R^{9/2})\\
&=P^{\varepsilon}(P^{5/2}R^{2}+P^{1/2}R^{9/2}).\end{align*}
An almost identical argument yields the same upper bound for the other integral in \eqref{eq:expanding Xi(n)}, whence
\begin{equation}
\Xi(\mathfrak{n})\ll P^{\varepsilon}(P^{5/2}R^{2}+P^{1/2}R^{9/2}).\label{eq:estimate for Xi(n)}\end{equation}
\par As for the contribution from the set of arcs $\mathfrak{N}\backslash\mathfrak{P}$,
note that expanding in \eqref{eq:Xi(B)} gives
\begin{equation}
\Xi(\mathfrak{N}\backslash\mathfrak{P})\ll\int_{\mathfrak{N}\backslash\mathfrak{P}}|F(\alpha)^{4}G(\alpha)^{4}|\:\mathrm{d}\alpha+\int_{\mathfrak{N}\backslash\mathfrak{P}}|F_{0}(\alpha)^{4}G(\alpha)^{4}|\:\mathrm{d}\alpha.\label{eq:expanding Xi(N-P)}\end{equation}
Recalling the respective definitions \eqref{eq:F*} and \eqref{eq:N(q,a)} of $F^*$ and $\mathfrak{N}(q,a)$, we deduce from \eqref{eq:approximating F} that
\[ F(\alpha)=F^*(\alpha;P^{3/4})+O(P^{3/8+\varepsilon})\]
for all $\alpha\in\mathfrak{N}$. An application of Hua's lemma thus yields
\begin{align}
&\int_{\mathfrak{N}\backslash\mathfrak{P}}|F(\alpha)^{4}G(\alpha)^{4}|\:\mathrm{d}\alpha\nonumber\\
=&\int_{\mathfrak{N}\backslash\mathfrak{P}}|F^*(\alpha;P^{3/4})^{4}G(\alpha)^{4}|\:\mathrm{d}\alpha+O\Big(P^{3/2+\varepsilon}\int_{0}^{1}|G(\alpha)|^4\;\mathrm{d}\alpha\Big)\nonumber\\
=&\int_{\mathfrak{N}\backslash\mathfrak{P}}|F^*(\alpha;P^{3/4})^{4}G(\alpha)^{4}|\:\mathrm{d}\alpha+O(P^{3/2+\varepsilon}R^2).\label{eq:replacing F by F*}\end{align}
\par An application of H\"{o}lder's inequality yields
\begin{align}
\int_{\mathfrak{N}\backslash\mathfrak{P}}&|F^*(\alpha;P^{3/4})^{4}G(\alpha)^{4}|\:\mathrm{d}\alpha\nonumber\\
\leq&\Big(\int_{\mathfrak{N}}|F^*(\alpha;P^{3/4})^{7/2}G(\alpha)^6|\;\mathrm{d}\alpha\Big)^{2/3}\Big(\int_{\mathfrak{N}\backslash\mathfrak{P}}|F^*(\alpha)|^{5}\:\mathrm{d}\alpha\Big)^{1/3}.\label{eq:Holder on int(F^4G^4) over N without P}\end{align}
From \eqref{eq:U_4} and \eqref{eq:G}, the first integral here is $U_4(7/2,P^{3/4};\mathcal{B})$ with $\mathcal{B}=[1,R]\cap\mathbb{Z}$. An application of Lemma \ref{lem:mean of f*^vh^6} thus yields the estimate
\begin{equation} \int_{\mathfrak{N}}|F^*(\alpha;P^{3/4})^{7/2}G(\alpha)^6|\:\mathrm{d}\alpha\ll P^{1/2}R^6.\label{eq:F*^7/2G^6}\end{equation}
Now the argument of Lemma 5.1 of \cite{vaughan1989} provides the bound
\begin{equation}
\int_{\mathfrak{N}\backslash\mathfrak{P}}|F^*(\alpha;P^{3/4})|^{5}\:\mathrm{d}\alpha\ll P^{2}L^{-1}.\label{eq:int(F^6) over N without P}\end{equation}
Inserting the estimates \eqref{eq:F*^7/2G^6} and \eqref{eq:int(F^6) over N without P}
into \eqref{eq:Holder on int(F^4G^4) over N without P}, and using \eqref{eq:replacing F by F*}, we obtain
\[
\int_{\mathfrak{N}\backslash\mathfrak{P}}|F(\alpha)^{4}G(\alpha)^{4}|\:\mathrm{d}\alpha\ll(P^{1/2}R^6)^{2/3}(P^2L^{-1})^{1/3}+P^{3/2+\varepsilon}R^2\ll PR^4L^{-1/3}.\]
A parallel argument yields the same upper bound for the remaining integral in \eqref{eq:expanding Xi(N-P)}. We therefore obtain
\[
\Xi(\mathfrak{N}\backslash\mathfrak{P})\ll PR^4L^{-1/3}.\]
\par This together with \eqref{eq:Bessel for asymptotic}, \eqref{eq:pruning for asymptotic}
and \eqref{eq:estimate for Xi(n)} imply the following.
\begin{prop}
As long as $1/4<\theta<1/3$, we have
\[
\sum_{N<n\leq2N}|\sigma_{\theta}(n;\mathfrak{p})|^{2}\ll PR^{4}L^{-1/25}.\]
\end{prop}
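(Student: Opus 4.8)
The analytic estimates required are already assembled, so the argument reduces to combining them and checking the exponent bookkeeping. First I would apply Bessel's inequality in the shape \eqref{eq:Bessel for asymptotic} together with the splitting \eqref{eq:pruning for asymptotic}, reducing the claim to showing that each of $\Xi(\mathfrak{n})$ and $\Xi(\mathfrak{N}\backslash\mathfrak{P})$ is $O(PR^{4}L^{-1/25})$.

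For the minor arcs, the bound \eqref{eq:estimate for Xi(n)} gives $\Xi(\mathfrak{n})\ll P^{\varepsilon}(P^{5/2}R^{2}+P^{1/2}R^{9/2})$, and since $R\in[N^{\theta},(2N)^{\theta}]$ and $N=4P^{3}$ we have $R\asymp P^{3\theta}$, hence $PR^{4}\asymp P^{1+12\theta}$. Then $P^{5/2}R^{2}\asymp P^{1+12\theta}\cdot P^{3/2-6\theta}$ and $P^{1/2}R^{9/2}\asymp P^{1+12\theta}\cdot P^{3\theta/2-1/2}$. The exponent $3/2-6\theta$ is negative exactly when $\theta>1/4$, and $3\theta/2-1/2$ is negative exactly when $\theta<1/3$; as these inequalities are strict, for a suitably small fixed $\varepsilon$ the resulting negative power of $P$ absorbs the factor $P^{\varepsilon}$ and dominates $L^{-1/25}=(\log P)^{-4}$ with room to spare. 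This is the only place the hypothesis $1/4<\theta<1/3$ is used, and it is essentially the whole content of the step.

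For the pruned major arcs I would simply quote the bound $\Xi(\mathfrak{N}\backslash\mathfrak{P})\ll PR^{4}L^{-1/3}$ established above; since $1/3>1/25$ this is already $O(PR^{4}L^{-1/25})$, uniformly in $\theta$. Adding the two contributions and feeding the result back through \eqref{eq:Bessel for asymptotic} and \eqref{eq:pruning for asymptotic} completes the proof. I do not expect any genuine obstacle here: all the substantive work (Weyl's inequality \eqref{eq:Weyl's ineq. on f over n}, the mean value \eqref{eq:mean value for F^2G^4}, Hua's lemma, and the pruning Lemma \ref{lem:mean of f*^vh^6}) has already been carried out, so what remains is routine manipulation of exponents of $P$ and $R$, with the determination of the exact range of $\theta$ being the one point requiring care.
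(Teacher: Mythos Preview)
Your proposal is correct and follows essentially the same approach as the paper: the paper assembles the bounds $\Xi(\mathfrak{n})\ll P^{\varepsilon}(P^{5/2}R^{2}+P^{1/2}R^{9/2})$ and $\Xi(\mathfrak{N}\backslash\mathfrak{P})\ll PR^{4}L^{-1/3}$ immediately before the proposition and then declares the result as a consequence via \eqref{eq:Bessel for asymptotic} and \eqref{eq:pruning for asymptotic}, leaving the exponent comparison implicit, whereas you spell out the arithmetic showing that $\theta>1/4$ and $\theta<1/3$ are exactly what make each term $o(PR^{4})$ by a power of $P$. One small caveat: the bound $\Xi(\mathfrak{N}\backslash\mathfrak{P})\ll PR^{4}L^{-1/3}$ is not quite uniform in $\theta$ as you claim, since absorbing the error term $P^{3/2+\varepsilon}R^{2}$ into $PR^{4}L^{-1/3}$ already uses $\theta>1/4$; but this is harmless given the hypothesis.
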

We conclude this section with a proof of Theorem \ref{thm:asymptotic}.
A simple averaging argument from the above Proposition implies that
for such $\theta$, the inequality
\begin{equation}
|\sigma_{\theta}(n;\mathfrak{p})|\ll P^{-1}R^{2}L^{-1/100}\ll P^{-1}R^{2}(\log N)^{-1}\label{eq:averaging on sigma(n;p)}\end{equation}
holds for all integers $n$ with $N<n\leq N+N(\log N)^{-2}$, with
$O(N(\log N)^{-4})$ exceptions. Coupled with
the conclusion of Proposition \ref{pro:major arc estimate for asymptotic},
the upper bound \eqref{eq:averaging on sigma(n;p)} implies that
\begin{equation}
\sigma_{\theta}(n)=\sigma_{\theta}(n;[0,1))=\frac{\Gamma(4/3)^{2}}{\Gamma(2/3)}\mathfrak{S}(n)P^{-1}R^{2}+O(P^{-1}R^{2}(\log N)^{-1})\label{eq:preliminary asymptotic for asymptotic}\end{equation}
for all but $O(N(\log N)^{-4})$ integers $n$ with $N<n\leq N+N(\log N)^{-2}$.
For all such $n$ and $\theta$, we have
\[
N^{\theta}<n^{\theta}\leq N^{\theta}+O(N^{\theta}(\log N)^{-2}).\]
Whence there exists a positive constant $A$ such that
\[
N^{\theta}<n^{\theta}\leq N^{\theta}+AN^{\theta}(\log N)^{-2}.\]
Recall the definition \eqref{eq:r(n)} of $r_{\theta}(n)$ as well
as equation (1.3) of \cite{kawada1996}, which gives $1\ll\mathfrak{S}(n)\ll(\log\log n)^{4}$.
Putting first $R=N^{\theta}$ and then $R=N^{\theta}+AN^{\theta}(\log N)^{-2}$
into \eqref{eq:preliminary asymptotic for asymptotic}, we arrive
at the relation
\[
r_{\theta}(n)=\frac{\Gamma(4/3)^{2}}{\Gamma(2/3)}\mathfrak{S}(n)N^{2\theta-1/3}+O(N^{2\theta-1/3}(\log N)^{\varepsilon-2}).\]
Hence when $1/4<\theta<1/3$, the aysmptotic formula\[
r_{\theta}(n)=\frac{\Gamma(4/3)^{2}}{\Gamma(2/3)}\mathfrak{S}(n)n^{2\theta-1/3}+O(n^{2\theta-1/3}(\log n)^{-1})\]
holds for all but $O(N(\log N)^{-4})$ integers $n$ with $N<n\leq N+N(\log N)^{-2}$.
There are altogether $O((\log N)^{3})$ intervals of such type that
cover $[1,2N]$. Summing over all such intervals leads to the same
asymptotic formula with the total number of exceptions encountered
being $O(N(\log N)^{-1})$. The conclusion of Theorem \ref{thm:asymptotic}
thus holds for all real numbers $\theta$ in the range $(1/4,1/3)$.\[
\]

\end{document}